\newtheorem{theorem}{Theorem}
\newtheorem{algorithm}[theorem]{Algorithm}
\newtheorem{lemma}[theorem]{Lemma}
\newenvironment{problem}{\pb\rm}{\endpb}
\newenvironment{remark}{\rem\rm}{\endrem}
\newcounter{unnumber}
\newenvironment{proof}{\prf\rm}{\hfill{$\blacksquare$}\endprf}
\newcommand{\R}{\mathbb{R}}%
\newcommand{\N}{\mathbb{N}}%
\newcommand{\ol}{\overline}%
\DeclareMathOperator*\inte{int}%
\DeclareMathOperator*\sqri{sqri}%
\DeclareMathOperator*\ri{ri}%
\DeclareMathOperator*\dom{dom}%
\DeclareMathOperator*\B{\overline{\R}}%
\DeclareMathOperator*\gr{Gr}%
\DeclareMathOperator*\ran{ran}%
\DeclareMathOperator*\id{Id}%
\DeclareMathOperator*\prox{prox}%
\DeclareMathOperator*\argmin{argmin}
\DeclareMathOperator*\zer{zer}
\title{Penalty schemes with inertial effects for monotone inclusion problems}
\author{Radu Ioan Bo\c{t} \thanks{University of Vienna, Faculty of Mathematics, Oskar-Morgenstern-Platz 1, A-1090 Vienna, Austria, 
email: radu.bot@univie.ac.at} \and 
Ern\"{o} Robert Csetnek \thanks {University of Vienna, Faculty of Mathematics, Oskar-Morgenstern-Platz 1, A-1090 Vienna, Austria, 
email: ernoe.robert.csetnek@univie.ac.at. Research supported by FWF (Austrian Science Fund), Lise Meitner Programme, project M 1682-N25.}}
\begin{document}
\maketitle

\noindent \textbf{Abstract.} We introduce a penalty term-based splitting algorithm with inertial effects designed for solving monotone inclusion problems involving the sum
of maximally monotone operators and the convex normal cone to the (nonempty) set of zeros of a monotone and Lipschitz continuous operator. We show weak ergodic 
convergence of the generated sequence of iterates to a solution of the monotone inclusion problem, provided a condition expressed 
via the Fitzpatrick function of the operator describing the underlying set of the normal cone is verified. Under strong monotonicity assumptions
we can even show strong nonergodic convergence of the iterates. This approach constitutes the starting point for investigating from a similar perspective
monotone inclusion problems involving linear compositions of parallel-sum operators and, further, for the minimization of a
complexly structured convex objective function subject to the set of minima of another convex and differentiable  function.\vspace{1ex}

\noindent \textbf{Key Words.} maximally monotone operator, Fitzpatrick function, resolvent, 
Lipschitz continuous operator, forward-backward-forward algorithm, 
subdifferential, Fenchel conjugate\vspace{1ex}

\noindent \textbf{AMS subject classification.} 47H05, 65K05, 90C25

\section{Introduction and preliminaries}\label{sec1}

The article \cite{att-cza-10} has represented the starting point of the investigations of several authors (see \cite{att-cza-10, att-cza-peyp-c, att-cza-peyp-p, noun-peyp, peyp-12, 
b-c-penalty, bc-viet, bb, b-c-dyn-pen-cont}) 
related to the solving of variational inequalities expressed as monotone inclusion of the form 
\begin{equation}\label{att-cza-peyp-p}
0\in Ax+N_M(x),
\end{equation}
where ${\cal H}$ is a real Hilbert space, $A:{\cal H}\rightrightarrows{\cal H}$ is a maximally monotone operator, $M:=\argmin \Psi$
is the set of global minima of the convex and differentiable function
$\Psi : {\cal H} \rightarrow \R$ fulfilling $\min \Psi=0$ and $N_M:{\cal H}\rightrightarrows{\cal H}$
denotes the normal cone to the set $M \subseteq {\cal H}$. One motivation for studying such monotone inclusions certainly comes from the fact that, 
when $A = \partial \Phi$ is the convex subdifferential of a proper, convex and lower semicontinuous function $\Phi : {\cal H} \rightarrow \overline \R$.  
this opens the gates towards the solving of convex minimization problems of type
\begin{equation}
\min_{x\in{\cal H}}\{\Phi(x):x\in\argmin\Psi\}.
\end{equation}
A fruitful approach proposed in the above-mentioned literature when numerically solving \eqref{att-cza-peyp-p} assumes the penalization of the function $\Psi$ and the performing
at each iteration of a gradient step with respect to it in combination with a proximal step with respect to $A$. In the asymptotic analysis of these schemes, a fundamental role 
is played by the condition
\begin{equation}\label{rel}
\mbox{ for every }p\in\ran N_M, \sum_{n \geq 1} \lambda_n\beta_n\left[\Psi^*\left(\frac{p}{\beta_n}\right)-\sigma_M\left(\frac{p}{\beta_n}\right)\right]<+\infty, 
\end{equation}
which is basically the discrete counterpart of a condition given in the continuous setting for nonautonomous differential inclusions in \cite{att-cza-10}.
Here, $\Psi^* : {\cal H} \rightarrow \overline \R$ denotes
the Fenchel conjugate function of $\Psi$, $\ran N_M$ the range of the
normal cone operator $N_M:{\cal H}\rightrightarrows{\cal H}$, $\sigma_M$ the support function of $M$ and 
$(\lambda_n)_{n \geq 1}$ and $(\beta_n)_{n \geq 1}$ are positive real sequences representing step sizes and penalty parameters, respectively. 
For conditions guaranteeing \eqref{rel} we refer the reader to \cite{att-cza-10, att-cza-peyp-c, att-cza-peyp-p, noun-peyp, peyp-12}.

In \cite{b-c-penalty,bc-viet} we investigated from a similar perspective the more general inclusion problem
\begin{equation}\label{att-cza-peyp-p-gen}
0\in Ax+Dx+N_M(x),
\end{equation}
where $A :{\cal H}\rightrightarrows{\cal H}$ is a maximally monotone operator, $D :{\cal H}\rightarrow {\cal H}$ a
(single-valued) monotone and Lipschitz continuous operator and $M \subseteq {\cal H}$ denotes the nonempty 
set of zeros of another monotone and Lipschitz continuous operator $B :{\cal H}\rightarrow {\cal H}$.  We formulated a forward-backward-forward algorithm of penalty type for solving 
\eqref{att-cza-peyp-p-gen} and proved weak ergodic convergence for the sequence of generated iterates, provided that a condition formulated in the spirit of \eqref{rel}, however, formulated  by using the Fitzpatrick function associated to $B$, is valid.

In this paper, our aim is to endow the forward-backward-forward algorithm of penalty type for solving \eqref{att-cza-peyp-p-gen} from \cite{b-c-penalty,bc-viet} with inertial effects. Iterative schemes with inertial effects have their roots in the implicit discretization of a differential system of second-order in time  (see \cite{alvarez-attouch2001}). One of the main features of the inertial splitting algorithm is that the new iterate is defined by making use of the previous two iterates. Taking into account
the "prehistory`` of the process can lead to an acceleration of the convergence of the iterates, as
it has been for instance pointed out by Polyak (see \cite{polyak}) in the context of minimizing a differentiable function. As emphasized by 
Bertsekas in \cite{bertsekas} (see also \cite{op} and \cite{bcl}),   
one of the aspects which makes algorithms with inertial (sometimes also called memory) effects
useful is their ability to detect optimal solutions of minimization problems which cannot be found by 
their noninertial variants. Since their introduction one can notice an increasing interest in inertial 
algorithms, which is exemplified by the following references 
\cite{alvarez2000, alvarez2004, alvarez-attouch2001, att-peyp-red, b-c-inertial, b-c-inertial-admm, b-c-inertialhybrid, b-c-inertial-nonc-ts, 
bch, bcl, cabot-frankel2011, chen-chan-ma-yang, chen-ma-yang, mainge2008, mainge-moudafi2008, moudafi-oliny2003}.

We show weak ergodic convergence of the sequence generated by the proposed iterative scheme to a solution of the monotone inclusion problem \eqref{att-cza-peyp-p-gen} by using Fej\'{e}r monotonicity techniques. The correspondent of condition \eqref{rel} in the context of monotone inclusion problems of type \eqref{att-cza-peyp-p-gen} will play again a decisive role. When the operator $A$ is assumed to be strongly monotone, the iterates are proved to converge strongly to the unique solution of 
\eqref{att-cza-peyp-p-gen}. By using a product space approach, we are also able to enhance these investigations to monotone inclusion problems involving linear compositions of parallel-sum operators. 
This further allows to formulate a penalty scheme with inertial effects for the minimization of a
complexly structured convex objective function subject to the set of minima of another convex and differentiable function.

Next we present some notations which are used throughout the paper
(see \cite{bo-van, b-hab, bauschke-book, EkTem, simons, Zal-carte}). Let ${\cal H}$ be a real Hilbert space with \textit{inner product} $\langle\cdot,\cdot\rangle$
and associated \textit{norm} $\|\cdot\|=\sqrt{\langle \cdot,\cdot\rangle}$. When ${\cal G}$ is another Hilbert space and $L:{\cal H} \rightarrow {\cal G}$ a linear
continuous operator, then the \textit{norm} of $L$ is defined as $\|L\| = \sup\{\|Lx\|: x \in {\cal H}, \|x\| \leq 1\}$, while
$L^* : {\cal G} \rightarrow {\cal H}$, defined by $\langle L^*y,x\rangle = \langle y,Lx \rangle$ for all $(x,y) \in {\cal H} \times {\cal G}$,
denotes the \textit{adjoint operator} of $L$.

For a function $f:{\cal H}\rightarrow\overline{\R}$ we denote by $\dom f=\{x\in {\cal H}:f(x)<+\infty\}$ its \textit{effective domain} and say that
$f$ is \textit{proper}, if $\dom f\neq\emptyset$ and $f(x)\neq-\infty$ for all $x\in {\cal H}$.  Let $f^*:{\cal H} \rightarrow \overline \R$,
$f^*(u)=\sup_{x\in {\cal H}}\{\langle u,x\rangle-f(x)\}$ for all $u\in {\cal H}$, be the \textit{conjugate function} of $f$.
We denote by $\Gamma({\cal H})$ the family of proper, convex and lower semi-continuous extended real-valued functions defined on ${\cal H}$.
The \textit{subdifferential} of $f$ at $x\in {\cal H}$, with $f(x)\in\R$, is the set
$\partial f(x):=\{v\in {\cal H}:f(y)\geq f(x)+\langle v,y-x\rangle \ \forall y\in {\cal H}\}$. We take by convention
$\partial f(x):=\emptyset$, if $f(x)\in\{\pm\infty\}$. We also denote by $\min f := \inf_{x \in {\cal H}} f(x)$ and by
$\argmin f :=\{x \in {\cal H}: f(x) = \min f \}$. For $f,g:{\cal H}\rightarrow \overline{\R}$ two proper functions, we consider
their \textit{infimal convolution}, which is the function $f\Box g:{\cal H}\rightarrow\B$, 
$(f\Box g)(x)=\inf_{y\in {\cal H}}\{f(y)+g(x-y)\}$.

Let $M\subseteq {\cal H}$ be a nonempty convex set. The \textit{indicator function} of $M$, $\delta_M:{\cal H}\rightarrow \overline{\R}$,
is the function which takes the value $0$ on $M$ and $+\infty$ otherwise. The subdifferential of the indicator function is the
\textit{normal cone} of $M$, that is $N_M(x)=\{u\in {\cal H}:\langle u,y-x\rangle\leq 0 \ \forall y\in M\}$, if $x\in M$ and
$N_M(x)=\emptyset$ for $x\notin M$. Notice that for $x\in M$, $u\in N_M(x)$ if and only if $\sigma_M(u)=\langle u,x\rangle$,
where $\sigma_M$ is the support function of $M$, defined by $\sigma_M(u)=\sup_{y\in M}\langle y,u\rangle$. Further,
we denote by $$\sqri M:=\{x\in M:\cup_{\lambda>0}\lambda(M-x) \ \mbox{is a closed linear subspace of} \ {\cal H}\}$$
the \textit{strong quasi-relative interior} of $M$. We always have $\inte M\subseteq\sqri M$ (in general this inclusion may be strict).
If ${\cal H}$ is finite-dimensional, then $\sqri M$ coincides with $\ri M$, the relative interior of $M$, which is the interior of $M$ with
respect to its affine hull.

For an arbitrary set-valued operator $A:{\cal H}\rightrightarrows {\cal H}$ we denote by $\gr A=\{(x,u)\in {\cal H}\times {\cal H}:u\in Ax\}$
its \emph{graph}, by $\dom A=\{x \in {\cal H} : Ax \neq \emptyset\}$ its \emph{domain}, by
$\ran A=\{u\in {\cal H}: \exists x\in {\cal H} \mbox{ s.t. }u\in Ax\}$ its \emph{range} and by $A^{-1}:{\cal H}\rightrightarrows {\cal H}$
its \emph{inverse operator}, defined by $(u,x)\in\gr A^{-1}$ if and only if $(x,u)\in\gr A$. The \textit{parallel sum} of two set-valued operators $A_1, A_2: {\cal H}\rightrightarrows {\cal H}$ is defined as
$$A_1 \Box A_2 : {\cal H}\rightrightarrows {\cal H},  A_1 \Box A_2  = \left(A_1^{-1} + A_2^{-1}\right)^{-1}.$$
We use also the notation
$\zer A=\{x\in {\cal H}: 0\in Ax\}$ for the \textit{set of zeros} of the operator $A$. We say that $A$ is \emph{monotone} if
$\langle x-y,u-v\rangle\geq 0$ for all $(x,u),(y,v)\in\gr A$. A monotone operator $A$ is said to be \emph{maximally monotone}, if
there exists no proper monotone extension of the graph of $A$ on ${\cal H}\times {\cal H}$. Let us mention that in case $A$ is
maximally monotone, $\zer A$ is a convex and closed set \cite[Proposition 23.39]{bauschke-book}. We refer to \cite[Section 23.4]{bauschke-book} for
conditions ensuring that $\zer A$ is nonempty.

If $A$ is maximally monotone, then one has the following characterization for the set of its zeros
\begin{equation}\label{charact-zeros-max}
z\in\zer M \mbox{ if and only if }\langle u-z,w\rangle\geq 0\mbox{ for all }(u,w)\in \gr M.
\end{equation}

The operator $A$ is said to be \emph{$\gamma$-strongly monotone} with $\gamma>0$, if $\langle x-y,u-v\rangle\geq \gamma\|x-y\|^2$ for
all $(x,u),(y,v)\in\gr A$. Notice that if $A$ is maximally monotone and strongly monotone, then $\zer A$ is a singleton, thus nonempty
(see \cite[Corollary 23.37]{bauschke-book}). Let $\gamma>0$ be arbitrary. A single-valued operator $A:{\cal H}\rightarrow {\cal H}$ is said 
to be \textit{$\gamma$-Lipschitz continuous},
if $\|Ax-Ay\|\leq \gamma\|x-y\|$ for all $(x,y)\in {\cal H}\times {\cal H}$.

The \emph{resolvent} of $A$, $J_A:{\cal H} \rightrightarrows {\cal H}$, is defined by $J_A=(\id+A)^{-1}$, where
$\id :{\cal H} \rightarrow {\cal H}, \id(x) = x$ for all $x \in {\cal H}$, is the \textit{identity operator} on ${\cal H}$.
Moreover, if $A$ is maximally monotone, then $J_A:{\cal H} \rightarrow {\cal H}$ is single-valued and maximally monotone
(cf. \cite[Proposition 23.7 and Corollary 23.10]{bauschke-book}). For an arbitrary $\gamma>0$ we have (see \cite[Proposition 23.18]{bauschke-book})
\begin{equation}\label{j-inv-op}
J_{\gamma A}+\gamma J_{\gamma^{-1}A^{-1}}\circ \gamma^{-1}\id=\id.
\end{equation}

When $f\in\Gamma({\cal H})$ and $\gamma > 0$, for every $x \in {\cal H}$ we denote by $\prox_{\gamma f}(x)$ the \textit{proximal point}
of parameter $\gamma$ of $f$ at $x$, which is the unique optimal solution of the optimization problem
\begin{equation}\label{prox-def}\inf_{y\in {\cal H}}\left \{f(y)+\frac{1}{2\gamma}\|y-x\|^2\right\}.
\end{equation}
Notice that $J_{\gamma\partial f}=(\id+\gamma\partial f)^{-1}=\prox_{\gamma f}$, thus
$\prox_{\gamma f} :{\cal H} \rightarrow {\cal H}$ is a single-valued operator fulfilling the extended \textit{Moreau's decomposition formula}
\begin{equation}\label{prox-f-star}
\prox\nolimits_{\gamma f}+\gamma\prox\nolimits_{(1/\gamma)f^*}\circ\gamma^{-1}\id=\id.
\end{equation}
Let us also recall that the function $f:{\cal H} \rightarrow \overline \R$ is said to be \textit{$\gamma$-strongly convex}
for $\gamma >0$, if $f-\frac{\gamma}{2}\|\cdot\|^2$ is a convex function. Let us mention that this property implies that $\partial f$ is $\gamma$-strongly monotone (see \cite[Example 22.3]{bauschke-book}).

The \emph{Fitzpatrick function} associated to a monotone operator $A$ is defined as
$$\varphi_A:{\cal H}\times {\cal H}\rightarrow \B, \ \varphi_A(x,u)=\sup_{(y,v)\in\gr A}\{\langle x,v\rangle+\langle y,u\rangle-\langle y,v\rangle\},$$
and it is a convex and lower semicontinuous function.  Introduced by Fitzpatrick in \cite{fitz},
this functione opened the gate towards the employment of convex analysis specific tools when investigating the maximality of monotone operators
(see \cite{bauschke-book, bausch-m-s, b-hab, BCW-set-val, bo-van, borw-06, bu-sv-02, simons} and the references therein) and it will play an important role throughout the paper. In case $A$ is
maximally monotone, $\varphi_A$ is proper and it fulfills
$$\varphi_A(x,u)\geq \langle x,u\rangle \ \forall (x,u)\in {\cal H}\times {\cal H},$$
with equality if and only if $(x,u)\in\gr A$. Notice that if $f\in\Gamma(\cal H)$,
then $\partial f$ is a maximally monotone operator (cf. \cite{rock}) and it holds $(\partial f)^{-1} = \partial f^*$. Furthermore, the following
inequality is true (see \cite{bausch-m-s})
\begin{equation}\label{fitzp-subdiff-ineq}
 \varphi_{\partial f}(x,u)\leq f(x) +f^*(u) \ \forall (x,u)\in {\cal H}\times {\cal H}.
\end{equation}
We refer the reader to \cite{bausch-m-s} for formulae of the Fitzpatrick function computed for particular classes of monotone
operators.

We close the section by presenting some convergence results that will be used several times in the paper. 
Let $(x_n)_{n \geq 1}$ be a sequence in ${\cal H}$ and $(\lambda_k)_{k \geq 1}$ a sequence of positive numbers such that 
$\sum_{k \geq 1} \lambda_k=+\infty$. Let $(z_n)_{n \geq 1} $ be the sequence of weighted averages defined as (see \cite{att-cza-peyp-c})
\begin{equation}\label{average}
z_n=\frac{1}{\tau_n}\sum_{k=1}^n\lambda_k x_k, \mbox{ where }\tau_n= \sum_{k=1}^n\lambda_k \ \forall n \geq 1.
\end{equation}

\begin{lemma}\label{opial-passty} (Opial-Passty) Let $F$ be a nonempty subset of ${\cal H}$ and assume 
that $\lim_{n\rightarrow\infty}\|x_n-x\|$ exists for every $x\in F$. If every sequential weak cluster 
point of $(x_n)_{n \geq 1}$ (respectively $(z_n)_{n \geq 1}$) lies in $F$, then $(x_n)_{n \geq 1}$ 
(respectively $(z_n)_{n \geq 1}$) converges weakly to an element in $F$ as $n\rightarrow+\infty$.
\end{lemma}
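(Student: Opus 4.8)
The plan is to treat the two assertions in parallel, relying on the classical Opial argument for $(x_n)$ and on a Cesàro-averaging refinement of it for $(z_n)$, the point being that the hypothesis controls only $\|x_n-x\|$ and says nothing a priori about $\|z_n-x\|$. First I would dispose of the sequence $(x_n)$. Fixing any $x_0\in F$, the existence of $\lim_n\|x_n-x_0\|$ makes $(x_n)$ bounded, so it admits at least one weak sequential cluster point, and by hypothesis every such cluster point lies in $F$. To show there is only one, I would take two cluster points $\bar x,\bar y\in F$ with $x_{n_k}\rightharpoonup\bar x$ and $x_{m_j}\rightharpoonup\bar y$ and use the expansion
\begin{equation}\label{pol-x}
\|x_n-\bar x\|^2-\|x_n-\bar y\|^2=2\langle x_n,\bar y-\bar x\rangle+\|\bar x\|^2-\|\bar y\|^2.
\end{equation}
Since $\bar x,\bar y\in F$, both norm sequences on the left converge, hence $\lim_n\langle x_n,\bar y-\bar x\rangle$ exists; evaluating this common limit along the two subsequences yields $\langle\bar x,\bar y-\bar x\rangle=\langle\bar y,\bar y-\bar x\rangle$, i.e. $\|\bar x-\bar y\|^2=0$. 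Thus the weak cluster point is unique, and a bounded sequence with a single weak cluster point converges weakly to it, with limit in $F$.

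The second assertion is the more delicate one, precisely because the hypothesis does not guarantee that $\lim_n\|z_n-x\|$ exists. The key observation is that the Opial uniqueness argument does not in fact require those norm limits, but only the convergence of the scalar products $\langle z_n,y-x\rangle$. I would first record the boundedness of $(z_n)$: from $\|z_n-x_0\|\le\frac{1}{\tau_n}\sum_{k=1}^n\lambda_k\|x_k-x_0\|$ and the boundedness of $(\|x_k-x_0\|)_{k\geq 1}$, the averages stay bounded, so $(z_n)$ has a weak cluster point, which lies in $F$ by hypothesis. Next, for fixed $x,y\in F$, identity \eqref{pol-x} applied to $(x_n)$ shows that the scalar sequence $b_k:=\langle x_k,y-x\rangle$ converges, say to $c$. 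Since $\lambda_k>0$ and $\tau_n\to+\infty$, a standard Toeplitz/Stolz--Ces\`aro estimate gives
\begin{equation}\label{cesaro}
\langle z_n,y-x\rangle=\frac{1}{\tau_n}\sum_{k=1}^n\lambda_k b_k\longrightarrow c,
\end{equation}
so that $\lim_n\langle z_n,y-x\rangle$ exists for all $x,y\in F$. With this in hand, the same computation as in \eqref{pol-x}, now written for $(z_n)$ in place of $(x_n)$, shows that any two weak cluster points of $(z_n)$ lying in $F$ must coincide, and the Opial--Passty conclusion follows: $(z_n)$ converges weakly to its unique cluster point in $F$.

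The main obstacle is exactly the mismatch between what is assumed ($\lim_n\|x_n-x\|$ exists) and what one would naively want for the averaged sequence ($\lim_n\|z_n-x\|$ exists); convexity of $\|\cdot\|^2$ yields only $\limsup_n\|z_n-x\|^2\le\lim_n\|x_n-x\|^2$, which is insufficient to run Opial verbatim. The resolution is to transfer convergence at the level of the linear functionals $\langle\cdot,y-x\rangle$ rather than of the norms: linearity makes the weighted average of a convergent scalar sequence converge to the same limit, and this is precisely the ingredient the uniqueness step needs. I expect the Stolz--Ces\`aro passage in \eqref{cesaro}, together with the careful bookkeeping of which quantities genuinely require a limit (inner products, not norms), to be the crux; the remainder is the routine bounded-sequence-with-unique-weak-cluster-point argument.
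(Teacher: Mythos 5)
Your proof is correct and complete: the paper itself states this lemma without proof (it is the classical Opial--Passty result, invoked from the cited literature), and your argument is precisely the standard one --- the polarization identity plus existence of $\lim_n\|x_n-x\|$ for $x\in F$ forces uniqueness of weak cluster points, and the Toeplitz/Ces\`aro transfer of the convergent scalars $\langle x_k,y-x\rangle$ to the weighted averages (using $\tau_n\to+\infty$) supplies exactly the ingredient needed to repeat the uniqueness step for $(z_n)$ without requiring $\lim_n\|z_n-x\|$ to exist. Your observation that only the inner products, not the norms, need limits for the ergodic case is exactly the crux of Passty's refinement.
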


\begin{lemma}\label{ext-fejer1} (see \cite{alvarez-attouch2001, alvarez2000, alvarez2004}) Let $(\varphi_n)_{n\geq 0}, (\delta_n)_{n\geq 1}$ 
and $(\alpha_n)_{n\geq 1}$ be sequences in
$[0,+\infty)$ such that $\varphi_{n+1}\leq\varphi_n+\alpha_n(\varphi_n-\varphi_{n-1})+\delta_n$
for all $n \geq 1$, $\sum_{n\geq 1}\delta_n< + \infty$ and there exists a real number $\alpha$ with
$0\leq\alpha_n\leq\alpha<1$ for all $n\geq 1$. Then the following statements are true: 
\begin{itemize}\item[(i)] $\sum_{n \geq 1}[\varphi_n-\varphi_{n-1}]_+< + \infty$, where
$[t]_+=\max\{t,0\}$; \item[(ii)] there exists $\varphi^*\in[0,+\infty)$ such that $\lim_{n\rightarrow+\infty}\varphi_n=\varphi^*$.\end{itemize}
\end{lemma}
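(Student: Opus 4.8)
The plan is to first establish (i) by collapsing the inertial three-term recursion into a one-step contraction-type inequality for the nonnegative increments, and then to deduce (ii) by exhibiting a monotone auxiliary sequence whose limit exists for trivial reasons.

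First I would set $t_n:=[\varphi_n-\varphi_{n-1}]_+$ for $n\geq 1$ and observe that, irrespective of the sign of $\varphi_n-\varphi_{n-1}$, the bound $0\leq\alpha_n\leq\alpha$ yields $\alpha_n(\varphi_n-\varphi_{n-1})\leq\alpha t_n$. Indeed, when the increment is nonnegative this is immediate since $\alpha_n\leq\alpha$, and when it is negative the left-hand side is $\leq 0=\alpha t_n$ because then $t_n=0$. Feeding this into the standing hypothesis gives $\varphi_{n+1}-\varphi_n\leq\alpha t_n+\delta_n$, and since the right-hand side is nonnegative, taking positive parts produces the key recursion $t_{n+1}\leq\alpha t_n+\delta_n$ for all $n\geq 1$.

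Next I would sum this recursion. Writing $S_n:=\sum_{k=1}^n t_k$, summation of $t_{k+1}\leq\alpha t_k+\delta_k$ over $k=1,\dots,n$ gives $S_{n+1}-t_1\leq\alpha S_n+\sum_{k=1}^n\delta_k$. Using the monotonicity $S_n\leq S_{n+1}$ together with $\sum_{k\geq 1}\delta_k<+\infty$, this rearranges to $(1-\alpha)S_{n+1}\leq t_1+\sum_{k\geq 1}\delta_k$, a bound independent of $n$. The strict inequality $\alpha<1$ is decisive here: it is precisely what makes $1-\alpha>0$ and thus forces $(S_n)_{n\geq 1}$ to be bounded, hence $\sum_{n\geq 1}t_n<+\infty$, which is assertion (i).

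Finally, for (ii) I would introduce the shifted sequence $w_n:=\varphi_n-\sum_{k=1}^n t_k$. Since $w_n-w_{n-1}=(\varphi_n-\varphi_{n-1})-[\varphi_n-\varphi_{n-1}]_+\leq 0$, the sequence $(w_n)_{n\geq 1}$ is nonincreasing, and because $\sum_{k\geq 1}t_k$ converges while $\varphi_n\geq 0$, it is bounded below, hence convergent to some $w^*$. Then $\varphi_n=w_n+\sum_{k=1}^n t_k$ converges to $\varphi^*:=w^*+\sum_{k\geq 1}t_k$, which is nonnegative as a limit of nonnegative terms. The only genuinely delicate point is the passage to positive parts in the first step, where one must verify $\alpha_n(\varphi_n-\varphi_{n-1})\leq\alpha t_n$ in both sign regimes; after that the proof reduces to summing a geometric-type estimate and recognizing a bounded monotone sequence, with the assumption $\alpha<1$ being exactly what prevents the increments from accumulating.
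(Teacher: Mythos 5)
Your proof is correct. Note that the paper does not prove this lemma at all---it is stated with citations to \cite{alvarez-attouch2001, alvarez2000, alvarez2004}---and your argument is precisely the standard one from those references: the positive-part recursion $t_{n+1}\leq\alpha t_n+\delta_n$ summed with $\alpha<1$ to get (i), followed by convergence of the nonincreasing, bounded-below auxiliary sequence $\varphi_n-\sum_{k=1}^n t_k$ to get (ii).
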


A direct consequence of Lemma \ref{ext-fejer1} is the following result.

\begin{lemma}\label{ext-fejer2} Let $(\varphi_n)_{n\geq 0}, (\delta_n)_{n\geq 1}, (\alpha_n)_{n\geq 1}$ and $(\beta_n)_{n\geq 1}$ 
be sequences in $[0,+\infty)$ such that $\varphi_{n+1}\leq-\beta_n+\varphi_n+\alpha_n(\varphi_n-\varphi_{n-1})+\delta_n$
for all $n \geq 1$, $\sum_{n\geq 1}\delta_n<+\infty$ and there exists a real number $\alpha$ with
$0\leq\alpha_n\leq\alpha<1$ for all $n\geq 1$. Then the following hold: \begin{itemize}\item[(i)] $\sum_{n \geq 1}[\varphi_n-\varphi_{n-1}]_+<+\infty$, where
$[t]_+=\max\{t,0\}$; \item[(ii)] there exists $\varphi^*\in[0,+\infty)$ such that $\lim_{n\rightarrow + \infty}\varphi_n=\varphi^*$; 
\item[(iii)] $\sum_{n\geq 1}\beta_n<+\infty$.\end{itemize}
\end{lemma}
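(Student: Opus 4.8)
The plan is to reduce the first two conclusions to Lemma~\ref{ext-fejer1} and then to establish the summability of $(\beta_n)_{n\geq 1}$ by a telescoping argument.

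Since every $\beta_n$ is nonnegative, dropping the term $-\beta_n$ on the right-hand side of the standing hypothesis immediately gives
$$\varphi_{n+1}\leq\varphi_n+\alpha_n(\varphi_n-\varphi_{n-1})+\delta_n \quad \forall n\geq 1,$$
which is precisely the inequality assumed in Lemma~\ref{ext-fejer1}. The remaining hypotheses, namely $\sum_{n\geq 1}\delta_n<+\infty$ and the existence of $\alpha$ with $0\leq\alpha_n\leq\alpha<1$, coincide with those of that lemma. Hence parts (i) and (ii) follow at once from the corresponding statements of Lemma~\ref{ext-fejer1}.

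It remains to prove (iii). First I would rearrange the standing inequality into the form
$$\beta_n\leq(\varphi_n-\varphi_{n+1})+\alpha_n(\varphi_n-\varphi_{n-1})+\delta_n \quad \forall n\geq 1$$
and sum it from $n=1$ to an arbitrary $N\geq 1$. The differences $\varphi_n-\varphi_{n+1}$ telescope to $\varphi_1-\varphi_{N+1}\leq\varphi_1$, using $\varphi_{N+1}\geq 0$. For the inertial terms I would invoke the elementary one-sided bound $\alpha_n(\varphi_n-\varphi_{n-1})\leq\alpha[\varphi_n-\varphi_{n-1}]_+$, so that $\sum_{n=1}^{N}\alpha_n(\varphi_n-\varphi_{n-1})\leq\alpha\sum_{n\geq 1}[\varphi_n-\varphi_{n-1}]_+$, which is finite by part (i). The $\delta$-terms are bounded by $\sum_{n\geq 1}\delta_n<+\infty$. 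Collecting these estimates yields
$$\sum_{n=1}^{N}\beta_n\leq\varphi_1+\alpha\sum_{n\geq 1}[\varphi_n-\varphi_{n-1}]_++\sum_{n\geq 1}\delta_n<+\infty,$$
with a right-hand side independent of $N$; letting $N\to+\infty$ then gives (iii).

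There is no serious obstacle here, the result being essentially a bookkeeping consequence of Lemma~\ref{ext-fejer1}. The only point requiring a little care is the sign of the increments $\varphi_n-\varphi_{n-1}$ in the inertial term: since these may be negative, one cannot bound $\alpha_n(\varphi_n-\varphi_{n-1})$ by $\alpha_n|\varphi_n-\varphi_{n-1}|$ without loss, but the one-sided estimate via the positive part $[\,\cdot\,]_+$ holds regardless of sign (it is trivial when the increment is negative and uses $\alpha_n\leq\alpha$ together with nonnegativity otherwise), and it is exactly what part (i) renders summable, which suffices for an upper bound on the partial sums of $(\beta_n)_{n\geq 1}$.
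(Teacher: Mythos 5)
Your proof is correct and follows exactly the route the paper intends: the paper states Lemma~\ref{ext-fejer2} as ``a direct consequence of Lemma~\ref{ext-fejer1}'' without giving details, and your argument---dropping the nonnegative $\beta_n$ to obtain (i) and (ii) from Lemma~\ref{ext-fejer1}, then summing the rearranged inequality with the telescoping bound and the one-sided estimate $\alpha_n(\varphi_n-\varphi_{n-1})\leq\alpha[\varphi_n-\varphi_{n-1}]_+$ to get (iii)---is precisely the bookkeeping that reduction requires.
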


\section{A forward-backward-forward penalty algorithm with inertial effects}\label{sec2}

Throughout this section we are concerned with the solving of the following monotone inclusion problem.

\begin{problem}\label{pr-Lip-single-val}
Let ${\cal H}$ be a real Hilbert space, $A:{\cal H}\rightrightarrows {\cal H}$ a maximally monotone operator, 
$D:{\cal H}\rightarrow{\cal H}$ a monotone and $\eta^{-1}$-Lipschitz continuous operator with $\eta > 0$, 
$B:{\cal H}\rightarrow{\cal H}$ a monotone and $\mu^{-1}$-Lipschitz continuous operator with $\mu>0$ and 
assume that $M=\zer B \neq\emptyset$. The monotone inclusion problem to solve is
$$0\in Ax+Dx+N_M(x).$$
\end{problem}

We propose the following iterative scheme for solving Problem \ref{pr-Lip-single-val}.

\begin{algorithm}\label{alg-fbf}$ $

\noindent\begin{tabular}{rl}
\verb"Initialization": & \verb"Choose" $x_0,x_1\in{\cal H}$\\
\verb"For" $n \geq 1 $ \verb"set": & $p_n=J_{\lambda_n A}(x_n-\lambda_nDx_n-\lambda_n\beta_nBx_n+\alpha_n(x_n-x_{n-1}))$\\
                                &  $x_{n+1}=\lambda_n\beta_n(Bx_n-Bp_n)+\lambda_n(Dx_n-Dp_n)+p_n$,
\end{tabular}
\end{algorithm}
where $(\lambda_n)_{n \geq 1}$, $(\beta_n)_{n \geq 1}$ and $(\alpha_n)_{n \geq 1}$ are sequences of positive real numbers that represent the step sizes, the penalty parameters and the control parameters of the inertial effects, respectively.

\begin{remark} When $\alpha_n=0$ for any $n\geq 1$, the above numerical scheme becomes Algorithm 3 
in \cite{b-c-penalty}. On the other hand, assume that $Bx=0$ for all $x\in {\cal H}$ (having as consequence $M=\cal H$ and $N_M(x)=\{0\}$). In this case, Algorithm \ref{alg-fbf}  turns out to be the inertial splitting method proposed and analyzed in \cite{b-c-inertial} for solving the monotone inclusion problem \begin{equation}\label{a-d}0\in Ax+Dx.\end{equation} 
If we combine these two cases, namely by assuming that $\alpha_n=0$ for any $n\geq 1$ and $Bx=0$ for all $x\in {\cal H}$, then Algorithm \ref{alg-fbf} is nothing else than Tseng's iterative scheme for solving \eqref{a-d} (see also \cite{br-combettes} for an error tolerant version 
of this method). 
\end{remark}

The following technical statement will be useful in the convergence analysis of Algorithm \ref{alg-fbf}. 

\begin{lemma}\label{fbf-ineq1}  Let $(x_n)_{n \geq 0}$ and $(p_n)_{n \geq 1}$ be the sequences generated by Algorithm \ref{alg-fbf} and 
let $(u,w) \in \gr(A+D+N_M)$ be such that $w=v+p+Du$, where $v\in Au$ and $p\in N_M(u)$. 
Then the following inequality holds for any $n\geq 1$:
\begin{align}\label{lem19}
\|x_{n+1}-u\|^2-\|x_n-u\|^2 \leq & \ \alpha_n(\|x_n-u\|^2-\|x_{n-1}-u\|^2)+ 2\alpha_n\|x_n-x_{n-1}\|^2\nonumber\\& \ \--\left[1-\left(\frac{\lambda_n\beta_n}{\mu} + \frac{\lambda_n}{\eta} \right)^2-\alpha_n\right]\|x_n-p_n\|^2 \nonumber\\
& \  + \ 2\lambda_n\beta_n\left[\sup_{u\in M}\varphi_B\left(u,\frac{p}{\beta_n}\right)-\sigma_M\left(\frac{p}{\beta_n}\right)\right]+2\lambda_n\langle u-p_n,w\rangle.
\end{align}
\end{lemma}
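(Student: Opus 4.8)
The plan is to start from the defining inclusion of the resolvent and combine the monotonicity of $A$ and $D$, the Fitzpatrick inequality for $B$, and two elementary Hilbert space identities. First I would set $y_n:=x_n-\lambda_nDx_n-\lambda_n\beta_nBx_n+\alpha_n(x_n-x_{n-1})$, so that $p_n=J_{\lambda_nA}(y_n)$ is equivalent to $\frac{1}{\lambda_n}(y_n-p_n)\in Ap_n$. Writing $a_n:=\frac{1}{\lambda_n}(y_n-p_n)\in Ap_n$ and testing monotonicity of $A$ against $(u,v)\in\gr A$ gives $\langle p_n-u,y_n-p_n\rangle\geq\lambda_n\langle p_n-u,v\rangle$. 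After substituting the expression for $y_n$, this yields the pivotal lower bound
$$\langle p_n-u,x_n-p_n\rangle\geq\lambda_n\langle p_n-u,v\rangle+\lambda_n\langle p_n-u,Dx_n\rangle+\lambda_n\beta_n\langle p_n-u,Bx_n\rangle-\alpha_n\langle p_n-u,x_n-x_{n-1}\rangle.$$

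Next I would expand the left-hand side of \eqref{lem19}. Using $x_{n+1}=p_n+r_n$ with $r_n:=\lambda_n(Dx_n-Dp_n)+\lambda_n\beta_n(Bx_n-Bp_n)$, together with the identity $\|p_n-u\|^2=\|x_n-u\|^2-\|x_n-p_n\|^2-2\langle p_n-u,x_n-p_n\rangle$, one obtains
$$\|x_{n+1}-u\|^2-\|x_n-u\|^2=-\|x_n-p_n\|^2-2\langle p_n-u,x_n-p_n\rangle+2\langle p_n-u,r_n\rangle+\|r_n\|^2.$$
Substituting the bound from the first step into the middle term and regrouping, the $Dx_n$-contribution cancels against the $D$-part of $r_n$, leaving $-2\lambda_n\langle p_n-u,Dp_n\rangle$, which is at most $-2\lambda_n\langle p_n-u,Du\rangle$ by monotonicity of $D$; likewise the $Bx_n$-contribution is replaced by $-2\lambda_n\beta_n\langle p_n-u,Bp_n\rangle$. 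Using the decomposition $w=v+p+Du$, the $v$- and $Du$-terms combine into $2\lambda_n\langle u-p_n,w\rangle$ plus a leftover $2\lambda_n\langle p_n-u,p\rangle$.

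The crux is to convert the remaining quantity $2\lambda_n\langle p_n-u,p\rangle-2\lambda_n\beta_n\langle p_n-u,Bp_n\rangle$ into the penalty term. Since $(p_n,Bp_n)\in\gr B$, the very definition of $\varphi_B$ evaluated at $(u,p/\beta_n)$ gives $\varphi_B(u,p/\beta_n)\geq\langle u,Bp_n\rangle+\langle p_n,p/\beta_n\rangle-\langle p_n,Bp_n\rangle$, and multiplying by $2\lambda_n\beta_n$ bounds exactly $2\lambda_n\langle p_n,p\rangle-2\lambda_n\beta_n\langle p_n-u,Bp_n\rangle$. Because $u\in M$ and $p\in N_M(u)$ we have $\langle u,p\rangle=\sigma_M(p)$, whence $-2\lambda_n\langle u,p\rangle=-2\lambda_n\beta_n\sigma_M(p/\beta_n)$ by positive homogeneity of $\sigma_M$; and since the fixed $u$ lies in $M$, $\varphi_B(u,p/\beta_n)\leq\sup_{u\in M}\varphi_B(u,p/\beta_n)$. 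This produces precisely the term $2\lambda_n\beta_n[\sup_{u\in M}\varphi_B(u,p/\beta_n)-\sigma_M(p/\beta_n)]$. I expect this bookkeeping — matching the $B$- and $p$-terms so that they assemble cleanly into the Fitzpatrick function — to be the delicate part of the argument.

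It then remains to absorb $\|r_n\|^2$ and the inertial term. The Lipschitz bounds $\|Dx_n-Dp_n\|\leq\eta^{-1}\|x_n-p_n\|$ and $\|Bx_n-Bp_n\|\leq\mu^{-1}\|x_n-p_n\|$ give $\|r_n\|\leq(\lambda_n\beta_n/\mu+\lambda_n/\eta)\|x_n-p_n\|$, so that $-\|x_n-p_n\|^2+\|r_n\|^2\leq-[1-(\lambda_n\beta_n/\mu+\lambda_n/\eta)^2]\|x_n-p_n\|^2$. For the inertial contribution $2\alpha_n\langle p_n-u,x_n-x_{n-1}\rangle$ I would split $p_n-u=(p_n-x_n)+(x_n-u)$, estimating the first part by Young's inequality as $2\alpha_n\langle p_n-x_n,x_n-x_{n-1}\rangle\leq\alpha_n\|x_n-p_n\|^2+\alpha_n\|x_n-x_{n-1}\|^2$ and the second by the identity $2\langle x_n-u,x_n-x_{n-1}\rangle=\|x_n-u\|^2-\|x_{n-1}-u\|^2+\|x_n-x_{n-1}\|^2$. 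The resulting $\alpha_n\|x_n-p_n\|^2$ merges with the descent term to give the coefficient $-[1-(\lambda_n\beta_n/\mu+\lambda_n/\eta)^2-\alpha_n]$, while the remaining pieces are exactly $\alpha_n(\|x_n-u\|^2-\|x_{n-1}-u\|^2)+2\alpha_n\|x_n-x_{n-1}\|^2$, yielding \eqref{lem19}.
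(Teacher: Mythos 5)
Your proposal is correct and follows essentially the same route as the paper's proof: the resolvent inclusion plus monotonicity of $A$, substitution of the update rule (your $r_n$ is exactly $x_{n+1}-p_n$), monotonicity of $D$, the Fitzpatrick bound at $(p_n,Bp_n)$ combined with $\langle u,p\rangle=\sigma_M(p)$ and positive homogeneity, the Lipschitz estimate $\|x_{n+1}-p_n\|\leq(\lambda_n\beta_n/\mu+\lambda_n/\eta)\|x_n-p_n\|$, and the same splitting of the inertial cross term. The only cosmetic difference is bookkeeping: you expand $\|x_{n+1}-u\|^2$ directly and invoke Young's inequality, whereas the paper works through inner-product identities and the bound $\|p_n-x_{n-1}\|^2\leq 2\|x_n-p_n\|^2+2\|x_n-x_{n-1}\|^2$, which is the same estimate.
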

\begin{proof} It follows from the definition of the resolvent operator that $\frac{1}{\lambda_n}(x_n-p_n)-\beta_nBx_n-Dx_n+\frac{\alpha_n}{\lambda_n}(x_n-x_{n-1})\in Ap_n$ 
for any $n \geq 1$ and, since $v\in Au$, the monotonicity of $A$ guarantees
$$\langle p_n-u,x_n-p_n-\lambda_n(\beta_nBx_n+Dx_n+v)+\alpha_n(x_n-x_{n-1})\rangle\geq 0 \ \forall n \geq 1,$$
thus
$$\langle u-p_n,x_n-p_n\rangle\leq \langle u-p_n,\lambda_n\beta_nBx_n+\lambda_nDx_n+\lambda_nv-\alpha_n(x_n-x_{n-1})\rangle \ \forall n \geq 1.$$

In the following we take into account the definition of $x_{n+1}$ given in the algorithm and obtain
\begin{align}\label{ineq}
 \langle u-p_n,x_n-p_n\rangle\leq & \ \langle u-p_n,x_{n+1}-p_n+\lambda_n\beta_nBp_n+\lambda_nDp_n+\lambda_nv-\alpha_n(x_n-x_{n-1})\rangle\nonumber\\
= & \ \langle u-p_n,x_{n+1}-p_n\rangle+\lambda_n\beta_n\langle u-p_n,Bp_n\rangle+\lambda_n\langle u-p_n,Dp_n\rangle\nonumber\\
&+\lambda_n\langle u-p_n,v\rangle +\alpha_n\langle p_n-u,x_n-x_{n-1}\rangle\ \forall n \geq 1.
\end{align}

Notice that for any $n\geq 1$ $$\langle u-p_n,x_n-p_n\rangle=\frac{1}{2}\|u-p_n\|^2-\frac{1}{2}\|x_n-u\|^2+\frac{1}{2}\|x_n-p_n\|^2,$$
$$\langle u-p_n,x_{n+1}-p_n\rangle=\frac{1}{2}\|u-p_n\|^2-\frac{1}{2}\|x_{n+1}-u\|^2+\frac{1}{2}\|x_{n+1}-p_n\|^2$$
and \begin{align*}\langle p_n-u,x_n-x_{n-1}\rangle&=\langle x_n-u,x_n-x_{n-1}\rangle+\langle p_n-x_n,x_n-x_{n-1}\rangle\\
&=\frac{\|x_n-x_{n-1}\|^2}{2}+\frac{\|x_n-u\|^2}{2}-\frac{\|x_{n-1}-u\|^2}{2}\\
& \ \ \ +\frac{\|p_n-x_{n-1}\|^2}{2}-\frac{\|x_n-x_{n-1}\|^2}{2}-\frac{\|x_n-p_n\|^2}{2}.
\end{align*}

By making use of these equalities, from \eqref{ineq} we obtain that for any $n\geq 1$
\begin{align*}
& \ \frac{1}{2}\|u-p_n\|^2-\frac{1}{2}\|x_n-u\|^2+\frac{1}{2}\|x_n-p_n\|^2\\
\leq & \ \frac{1}{2}\|u-p_n\|^2-\frac{1}{2}\|x_{n+1}-u\|^2+\frac{1}{2}\|x_{n+1}-p_n\|^2 +\\
&  \ \lambda_n\beta_n\langle u-p_n,Bp_n\rangle+\lambda_n\langle u-p_n,Dp_n\rangle+\lambda_n\langle u-p_n,v\rangle + \\
& \ \alpha_n\left[\frac{\|x_n-u\|^2}{2}-\frac{\|x_{n-1}-u\|^2}{2}+\frac{\|p_n-x_{n-1}\|^2}{2}-\frac{\|x_n-p_n\|^2}{2}\right].
\end{align*}

Further, by using the inequality $$\|p_n-x_{n-1}\|^2\leq 2\|x_n-p_n\|^2+2\|x_n-x_{n-1}\|^2,$$ the 
relation $v=w-p-Du$ and the definition of the Fitzpatrick function we derive for any $n\geq 1$
\begin{align*}
& \ \|x_{n+1}-u\|^2-\|x_n-u\|^2\\
\leq & \ \|x_{n+1}-p_n\|^2-\|x_n-p_n\|^2+2\lambda_n\beta_n\left(\langle u,Bp_n\rangle+\left \langle p_n,\frac{p}{\beta_n} \right \rangle-\langle p_n,Bp_n\rangle
-\left \langle u,\frac{p}{\beta_n} \right \rangle\right)+\\
& 2\lambda_n\langle u-p_n,Dp_n-Du\rangle+2\lambda_n\langle u-p_n,w\rangle+\\
& \alpha_n\left[\|x_n-u\|^2-\|x_{n-1}-u\|^2+\|x_n-p_n\|^2+2\|x_n-x_{n-1}\|^2\right]\\
\leq & \ \|x_{n+1}-p_n\|^2-\|x_n-p_n\|^2 +2\lambda_n\beta_n\left[\sup_{u\in M} \varphi_B\left(u,\frac{p}{\beta_n}\right)-\sigma_M\left(\frac{p}{\beta_n}\right)\right]+\\
&2\lambda_n\langle u-p_n,Dp_n-Du\rangle+2\lambda_n\langle u-p_n,w\rangle+\\
&\alpha_n\left[\|x_n-u\|^2-\|x_{n-1}-u\|^2+\|x_n-p_n\|^2+2\|x_n-x_{n-1}\|^2\right].
\end{align*}

Since $D$ is monotone, we have $\langle u-p_n,Dp_n-Du\rangle\leq 0$ for any $n\geq 1$ and the conclusion follows by noticing that the 
Lipschitz continuity of $B$ and $D$ yields 
$$\|x_{n+1}-p_n\|\leq \frac{\lambda_n\beta_n}{\mu} \|x_n-p_n\|+\frac{\lambda_n}{\eta}\|x_n-p_n\| = 
\left (\frac{\lambda_n\beta_n}{\mu} + \frac{\lambda_n}{\eta} \right)\|x_n-p_n\| \ \forall n \geq 1.$$
\end{proof}

We will prove the convergence of Algorithm \ref{alg-fbf} under the following hypotheses:
$$(H_{fitz})\left\{
\begin{array}{lll}
(i) \ A+N_M \mbox{ is maximally monotone and }\zer(A+D+N_M)\neq\emptyset;\\
(ii) \ \mbox{ For every }p\in\ran N_M, \sum_{n \geq 1} \lambda_n\beta_n\left[\sup\limits_{u\in M}\varphi_B\left(u,\frac{p}{\beta_n}\right)-\sigma_M\left(\frac{p}{\beta_n}\right)\right]<+\infty;\\
(iii) \ (\lambda_n)_{n \geq 1} \in\ell^2\setminus\ell^1.\end{array}\right.$$

\begin{remark}\label{cond} The first part of the statement in (i) is verified if one of the Rockafellar conditions 
$M\cap\inte \dom A \neq\emptyset$ or $\dom A\cap \inte M\neq\emptyset$ is fulfilled (see \cite{rock-cond}). We refer the reader to \cite{bauschke-book, b-hab, BCW-set-val, bo-van, borw-06, simons} 
for further conditions which guarantee the maximality
of the sum of maximally monotone operators. Further, we refer to \cite[Subsection 23.4]{bauschke-book} for conditions ensuring 
that the set of zeros of a maximally monotone operator is nonempty. The condition (ii) above has been introduced 
for the first time in \cite{b-c-penalty}. According to \cite[Remark 4]{b-c-penalty}, the hypothesis (ii) is a 
generalization of the condition considered in \cite{att-cza-peyp-c} (see also $(H_{fitz}^{opt})$ and Remark \ref{cond-subdiff} 
in Section 4 for conditions guaranteeing (ii)).
\end{remark}

\begin{remark} (see also \cite{b-c-penalty}) Since $D$ is maximally monotone (see \cite[Example 20.28]{bauschke-book}) and $\dom D={\cal H}$, the hypothesis (i) 
above guarantees that $A+D+N_M$ is maximally monotone, too (see \cite[Corollary 24.4]{bauschke-book}). Moreover, for each 
$p\in\ran N_M$ we have
$$\sup\limits_{u\in M}\varphi_B\left(u,\frac{p}{\beta_n}\right)-\sigma_M\left(\frac{p}{\beta_n}\right)\geq 0 \ \forall n\geq 1.$$
Indeed, if $p\in\ran N_M$, then there exists $\ol u\in M$ such that $p\in N_M(\ol u)$. This implies that
$$\sup\limits_{u\in M}\varphi_B\left(u,\frac{p}{\beta_n}\right)-\sigma_M\left(\frac{p}{\beta_n}\right)
\geq \left\langle \ol u,\frac{p}{\beta_n}\right\rangle-\sigma_M\left(\frac{p}{\beta_n}\right)=0 \ \forall n \geq 1.$$\end{remark}

Let us state now the convergence properties of the sequences generated by Algorithm \ref{alg-fbf}.

\begin{theorem}\label{fbf-conv} Let $(x_n)_{n \geq 0}$ and $(p_n)_{n \geq 1}$ be the sequences generated 
by Algorithm \ref{alg-fbf} and $(z_n)_{n \geq 1}$ the sequence defined in \eqref{average}. 
Assume that $(H_{fitz})$ is fulfilled, $(\alpha_n)_{n\geq 1}$ 
is nondecreasing and there exist $n_0\geq 1$, $\alpha\geq 0$ and $\sigma>0$ such that for any $n \geq n_0$
\begin{equation}\label{a_n}0\leq\alpha_n\leq \alpha \end{equation} and 
\begin{equation}\label{a-s}5\alpha+2\sigma+(1+4\alpha+2\sigma)\left (\frac{\lambda_n\beta_n}{\mu} + \frac{\lambda_n}{\eta} \right)^2\leq 1.\end{equation}
The following statements hold: 
\begin{enumerate}
 \item [(i)] $\sum_{n\geq 0}\|x_{n+1}-x_n\|^2<+\infty$ and $\sum_{n\geq 1}\|x_n-p_n\|^2<+\infty$; 
 \item[(ii)] $(z_n)_{n \geq 1}$ converges weakly to an element in $\zer(A+D+N_M)$ as $n\rightarrow+\infty$;
 \item[(iii)] if $A$ is $\gamma$-strongly monotone with $\gamma>0$, then $(x_n)_{n\geq 0}$ and $(p_n)_{n\geq 1}$ converge 
 strongly to the unique element in $\zer(A+D+N_M)$ as $n\rightarrow+\infty$. 
\end{enumerate}
\end{theorem}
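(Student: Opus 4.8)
The plan is to monitor the squared distance $\varphi_n:=\|x_n-u\|^2$ to a fixed solution $u\in\zer(A+D+N_M)$ and to feed a suitable recursion into Lemmas \ref{ext-fejer1} and \ref{ext-fejer2}. First I would fix such a $u$ and decompose $0=v+p+Du$ with $v\in Au$ and $p\in N_M(u)$, so that in Lemma \ref{fbf-ineq1} one has $w=0$ and the term $2\lambda_n\langle u-p_n,w\rangle$ vanishes; moreover $p\in\ran N_M$, so by $(H_{fitz})(ii)$ the nonnegative numbers $\delta_n:=2\lambda_n\beta_n[\sup_{u\in M}\varphi_B(u,p/\beta_n)-\sigma_M(p/\beta_n)]$ are summable. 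Writing $r_n:=\lambda_n\beta_n/\mu+\lambda_n/\eta$ (so that $r_n<1$ by \eqref{a-s}), the second line of Algorithm \ref{alg-fbf} and the Lipschitz continuity of $B,D$ give $\|x_{n+1}-p_n\|\le r_n\|x_n-p_n\|$, and $(a+b)^2\le 2a^2+2b^2$ yields the coupling $\|x_n-x_{n-1}\|^2\le 2(1+r_{n-1}^2)\|x_{n-1}-p_{n-1}\|^2$. Substituting these into \eqref{lem19} produces, for $n$ large, a recursion $\varphi_{n+1}\le\varphi_n+\alpha_n(\varphi_n-\varphi_{n-1})+g_n a_{n-1}-b_n a_n+\delta_n$, where $a_n:=\|x_n-p_n\|^2$, $g_n:=4\alpha_n(1+r_{n-1}^2)$ and $b_n:=1-r_n^2-\alpha_n$.

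The point of \eqref{a-s} is now transparent: using $\alpha_{n+1},\alpha_n\le\alpha$ it forces $g_{n+1}-b_n\le -2\sigma(1+r_n^2)\le -2\sigma<0$, while $b_n>0$. Summing the recursion from some $n_0$ to $N$, the distance terms reindex to $g_{n_0}a_{n_0-1}+\sum_{m=n_0}^{N-1}(g_{m+1}-b_m)a_m-b_N a_N\le g_{n_0}a_{n_0-1}-2\sigma\sum_{m=n_0}^{N-1}a_m$, whereas summation by parts, using that $(\alpha_n)$ is nondecreasing and $\varphi_n\ge 0$, gives $\sum_{n=n_0}^N\alpha_n(\varphi_n-\varphi_{n-1})=\alpha_N\varphi_N-\alpha_{n_0}\varphi_{n_0-1}-\sum_{n=n_0}^{N-1}(\alpha_{n+1}-\alpha_n)\varphi_n\le\alpha\varphi_N$. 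This yields a self-referential bound $\varphi_{N+1}\le C+\alpha\varphi_N-2\sigma\sum_{m=n_0}^{N-1}a_m$, from which a running-maximum argument (exploiting $\alpha<1$) gives $\sup_n\varphi_n<+\infty$; reinserting this bound forces $\sum_{n\ge 1}a_n<+\infty$, and the coupling then gives $\sum_{n\ge 0}\|x_{n+1}-x_n\|^2<+\infty$, which is (i). With $\sum_n a_n<+\infty$ secured, $g_n a_{n-1}+\delta_n$ is summable, so dropping $-b_n a_n$ and applying Lemma \ref{ext-fejer1} to $\varphi_n$ shows that $\lim_n\|x_n-u\|$ exists for every $u\in\zer(A+D+N_M)$ (and $\sum_n[\varphi_n-\varphi_{n-1}]_+<+\infty$). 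I expect this step --- neutralizing the inertial term $2\alpha_n\|x_n-x_{n-1}\|^2$ by coupling it to the favorable term at the previous index and then controlling the residual $\sum\alpha_n(\varphi_n-\varphi_{n-1})$ --- to be the main obstacle, and the constants in \eqref{a-s} are calibrated precisely for it.

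For (ii) I would pass to ergodic averages. Since $(\lambda_n)\in\ell^2$ and $\sum_n a_n<+\infty$, Cauchy--Schwarz gives $\sum_n\lambda_n\|x_n-p_n\|<+\infty$, so $z_n$ and $\tilde z_n:=\tau_n^{-1}\sum_{k=1}^n\lambda_k p_k$ have the same weak cluster points (as $\tau_n\to+\infty$ because $(\lambda_n)\notin\ell^1$). Taking now an arbitrary $(u,w)\in\gr(A+D+N_M)$, rearranging \eqref{lem19} into an upper bound for $2\lambda_n\langle p_n-u,w\rangle$ and summing over $k=1,\dots,n$, every term on the right is uniformly bounded: the $\varphi$-differences telescope, $\sum\alpha_k(\varphi_k-\varphi_{k-1})\le\alpha\sum[\varphi_k-\varphi_{k-1}]_+<+\infty$ and $\sum\alpha_k\|x_k-x_{k-1}\|^2<+\infty$ by (i), while $\sum\delta_k<+\infty$ by $(H_{fitz})(ii)$. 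Dividing by $\tau_n$ thus gives $\langle\tilde z_n-u,w\rangle\le C\tau_n^{-1}\to 0$. Evaluating along a weakly convergent subsequence $\tilde z_{n_j}\rightharpoonup\bar z$ yields $\langle u-\bar z,w\rangle\ge 0$ for all $(u,w)\in\gr(A+D+N_M)$; since $A+D+N_M$ is maximally monotone, \eqref{charact-zeros-max} forces $\bar z\in\zer(A+D+N_M)$. As $\lim_n\|x_n-u\|$ exists for each such $u$, the Opial--Passty Lemma \ref{opial-passty} (in its $(z_n)$ version) delivers weak convergence of $(z_n)$ to a solution.

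For (iii), strong monotonicity of $A$ makes $A+D+N_M$ strongly monotone, hence $\zer(A+D+N_M)=\{u\}$ is a singleton. Re-examining the proof of Lemma \ref{fbf-ineq1}, the monotonicity estimate for $A$ sharpens to give an additional $\gamma\|p_n-u\|^2$, so \eqref{lem19} acquires an extra favorable term $-2\lambda_n\gamma\|p_n-u\|^2$ on its right-hand side. Carrying this term through the summation of (i) shows $\sum_n\lambda_n\|p_n-u\|^2<+\infty$; since $\sum_n\lambda_n=+\infty$, this forces $\liminf_n\|p_n-u\|=0$. But $\lim_n\|x_n-u\|$ exists by (i) and $\|x_n-p_n\|\to 0$ because $\sum_n a_n<+\infty$, so $\lim_n\|p_n-u\|$ exists and must equal $0$; therefore $\|x_n-u\|\to 0$ and $\|p_n-u\|\to 0$, which is the claimed strong convergence to the unique solution.
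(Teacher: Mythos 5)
Your proposal is correct --- every step checks out --- and it follows the same overall architecture as the paper's proof: Lemma \ref{fbf-ineq1} specialized to $w=0$ for the Fej\'er-type estimates, condition \eqref{a-s} to neutralize the inertial terms, the ergodic averages $\tilde z_n$ of $(p_n)$ combined with the characterization \eqref{charact-zeros-max} of zeros of the maximally monotone operator $A+D+N_M$, Opial--Passty for (ii), and the extra term $-2\gamma\lambda_n\|p_n-u\|^2$ for (iii). The one place where you genuinely deviate is the bookkeeping in part (i). The paper forms the Alvarez--Attouch-type Lyapunov sequence $\mu_n=\varphi_n-\alpha_n\varphi_{n-1}+2\alpha_n\|x_n-x_{n-1}\|^2$, converts the favorable term \emph{forward} via $\|x_{n+1}-x_n\|\leq\left(1+\frac{\lambda_n\beta_n}{\mu}+\frac{\lambda_n}{\eta}\right)\|x_n-p_n\|$, and then invokes Lemma \ref{ext-fejer2}; you instead couple the inertial term \emph{backward}, $\|x_n-x_{n-1}\|^2\leq 2(1+r_{n-1}^2)\|x_{n-1}-p_{n-1}\|^2$, and run an explicit reindexed summation (with Abel summation on $\sum\alpha_n(\varphi_n-\varphi_{n-1})$ and a running-maximum argument for boundedness), after which only Lemma \ref{ext-fejer1} is needed. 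The two calibrations of the constants are in fact identical: your inequality $g_{n+1}-b_n\leq-2\sigma(1+r_n^2)$ is literally equivalent to \eqref{a-s}, while the paper reaches the same expression through $(1+r_n)^2\leq 2(1+r_n^2)$. What your route buys is transparency --- it shows exactly why \eqref{a-s} has the form it has, and it replaces the appeal to Lemma \ref{ext-fejer2} by a self-contained summation; what the paper's Lyapunov route buys is brevity and conformity with the standard template for inertial schemes. Two trivial points you should still patch when writing this up: the backward coupling only makes sense for $n\geq 2$ (for $n=1$ the difference $x_1-x_0$ comes from the initialization, not from the update rule), so your sums should start at $\max\{n_0,2\}$ with the leftover terms absorbed into the constant; and the recursions fed into Lemma \ref{ext-fejer1} hold only from $n_0$ on, so formally one applies the lemma to index-shifted sequences --- the paper glosses over the same point.
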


\begin{remark}\label{rem-a-s} When
$$\limsup_{n\rightarrow + \infty}\left (\frac{\lambda_n\beta_n}{\mu} + \frac{\lambda_n}{\eta} \right)<1,$$
one can select $\alpha\geq 0$, $\sigma>0$ and $n_0\geq 1$ such that \eqref{a-s} holds for any $n \geq n_0$. With given $\alpha$ and $n_0$ one can chose 
a nondecreasing sequence $(\alpha_n)_{n\geq 1}$ fulfilling \eqref{a_n} for any $n \geq n_0$, too. 
\end{remark}

\begin{proof} We start by noticing that for $(u,w) \in \gr(A+D+N_M)$ such that $w=v+p+Du$, where $v\in Au$ and $p\in N_M(u)$, from \eqref{lem19} we obtain for any $n\geq n_0$
\begin{align}\label{lem19-1}
\|x_{n+1}-u\|^2-\|x_n-u\|^2 \leq & \ \alpha_n(\|x-u\|^2-\|x_{n-1}-u\|^2)+ 2\alpha_n\|x_n-x_{n-1}\|^2\nonumber\\
& \ \--\frac{\left[1-\left(\frac{\lambda_n\beta_n}{\mu} + \frac{\lambda_n}{\eta} \right)^2-\alpha_n\right]}{\left(1+\frac{\lambda_n\beta_n}{\mu} + \frac{\lambda_n}{\eta}\right)^2}\|x_{n+1}-x_n\|^2 \nonumber\\
& \  + \ 2\lambda_n\beta_n\left[\sup_{u\in M}\varphi_B\left(u,\frac{p}{\beta_n}\right)-\sigma_M\left(\frac{p}{\beta_n}\right)\right]+2\lambda_n\langle u-p_n,w\rangle.
\end{align}
Indeed, this follows by taking into consideration that 
\begin{align*}\|x_{n+1}-x_n\|&=\|\lambda_n\beta_n(Bx_n-Bp_n)+\lambda_n(Dx_n-Dp_n)+p_n-x_n\|\\
& \leq \left(1+\frac{\lambda_n\beta_n}{\mu} + \frac{\lambda_n}{\eta}\right)\|x_n-p_n\|
\end{align*}
and,  due to \eqref{a-s},
$$1-\left(\frac{\lambda_n\beta_n}{\mu} + \frac{\lambda_n}{\eta} \right)^2-\alpha_n>0 \ \forall n\geq n_0.$$
 
With the notations $$\varphi_n:=\|x_n-u\|^2 \ \forall n\geq 0$$ and 
$$\mu_n:=\varphi_n-\alpha_n\varphi_{n-1}+2\alpha_n\|x_n-x_{n-1}\|^2 \ \forall n\geq 1,$$
we obtain from \eqref{lem19-1} and the fact that $(\alpha_n)_{n\geq 1}$ is nondecreasing the inequality 
\begin{align}\label{mu_n}\mu_{n+1}-\mu_n&\leq \varphi_{n+1}-\varphi_n-\alpha_n(\varphi_n-\varphi_{n-1})+2\alpha_{n+1}\|x_{n+1}-x_n\|^2-2\alpha_n\|x_n-x_{n-1}\|^2\nonumber\\
& \leq \left[2\alpha_{n+1}-\frac{1-\left(\frac{\lambda_n\beta_n}{\mu} + \frac{\lambda_n}{\eta} \right)^2-\alpha_n}
{\left(1+\frac{\lambda_n\beta_n}{\mu} + \frac{\lambda_n}{\eta}\right)^2}\right]\|x_{n+1}-x_n\|^2\nonumber\\
&  \ \ \  + \ 2\lambda_n\beta_n\left[\sup_{u\in M}\varphi_B\left(u,\frac{p}{\beta_n}\right)-\sigma_M\left(\frac{p}{\beta_n}\right)\right]+2\lambda_n\langle u-p_n,w\rangle \ \forall n\geq n_0.
\end{align}

Further, we claim that \begin{equation}\label{a} 2\alpha_{n+1}-\frac{1-\left(\frac{\lambda_n\beta_n}{\mu} + \frac{\lambda_n}{\eta} \right)^2-\alpha_n}
{\left(1+\frac{\lambda_n\beta_n}{\mu} + \frac{\lambda_n}{\eta}\right)^2}\leq-\sigma \ \forall n\geq n_0.\end{equation}

Indeed, this is equivalent to 
$$\left(2\alpha_{n+1}+\sigma\right)\left(1+\frac{\lambda_n\beta_n}{\mu} + \frac{\lambda_n}{\eta}\right)^2+
\alpha_n+\left(\frac{\lambda_n\beta_n}{\mu} + \frac{\lambda_n}{\eta} \right)^2\leq 1 \ \forall n\geq n_0,$$
which is true since due to \eqref{a_n}-\eqref{a-s} we have for any $n\geq n_0$
$$\left(2\alpha_{n+1}+\sigma\right)\left(1+\frac{\lambda_n\beta_n}{\mu} + \frac{\lambda_n}{\eta}\right)^2+
\alpha_n+\left(\frac{\lambda_n\beta_n}{\mu} + \frac{\lambda_n}{\eta} \right)^2\leq $$
$$\left(2\alpha+\sigma\right)\left(1+\frac{\lambda_n\beta_n}{\mu} + \frac{\lambda_n}{\eta}\right)^2
+\alpha+\left(\frac{\lambda_n\beta_n}{\mu} + \frac{\lambda_n}{\eta} \right)^2\leq$$
$$2(2\alpha+\sigma)\left(1+\left(\frac{\lambda_n\beta_n}{\mu} + \frac{\lambda_n}{\eta} \right)^2\right)
+\alpha+\left(\frac{\lambda_n\beta_n}{\mu} + \frac{\lambda_n}{\eta} \right)^2\leq 1.$$

We conclude from \eqref{mu_n}-\eqref{a} that for any $n \geq n_0$
\begin{align}\label{mu_n-1}\mu_{n+1}-\mu_n&\leq -\sigma\|x_{n+1}-x_n\|^2
+ 2\lambda_n\beta_n\left[\sup_{u\in M}\varphi_B\left(u,\frac{p}{\beta_n}\right)-\sigma_M\left(\frac{p}{\beta_n}\right)\right] + 2\lambda_n\langle u-p_n,w\rangle.
\end{align}

Next we divide the proof in three parts.

(a) First we prove that for every $u\in\zer(A+D+N_M)$ the sequence $(\|x_n-u\|)_{n \geq 0}$ is convergent and the statement (i) holds.

Take an arbitrary $u\in\zer(A+D+N_M)$. In this case one can take $w=0$ in \eqref{mu_n-1} and summing up these inequalities for 
$n=n_0,...,N$, where $N\geq n_0$, we obtain \begin{align}\label{eq20}\mu_{N+1}-\mu_{n_0}&\leq -\sigma\sum_{n=n_0}^N\|x_{n+1}-x_n\|^2
+ 2\sum_{n=n_0}^N\lambda_n\beta_n\left[\sup_{u\in M}\varphi_B\left(u,\frac{p}{\beta_n}\right)-\sigma_M\left(\frac{p}{\beta_n}\right)\right]\\
&\leq 2\sum_{n=1}^{\infty}\lambda_n\beta_n\left[\sup_{u\in M}\varphi_B\left(u,\frac{p}{\beta_n}\right)-\sigma_M\left(\frac{p}{\beta_n}\right)\right],\nonumber
\end{align}
hence $(\mu_n)_{n\geq 1}$ is bounded from above due to $(H_{fitz})$. Let $C>0$ be an upper bound of this sequence. By using \eqref{a_n} we get 
$$\varphi_n-\alpha\varphi_{n-1}\leq\mu_n\leq C \ \forall n\geq n_0,$$
from which we deduce (since $\alpha\in(0,1)$ due to \eqref{a-s})
\begin{align}\label{ineq-phi}\varphi_n&\leq \alpha^{n-n_0}\varphi_{n_0}+C\sum_{k=1}^{n-n_0}\alpha^{k-1} \leq \alpha^{n-n_0}\varphi_{n_0}+\frac{C}{1-\alpha}\ \forall n\geq n_0+1.
\end{align}

Further, by combining \eqref{eq20}, the definition of the sequence $(\mu_n)_{n\geq 1}$, \eqref{a_n} and \eqref{ineq-phi} 
we obtain for any $N\geq n_0+1$
\begin{align}\label{x-l2}\sigma\sum_{n=n_0}^N\|x_{n+1}-x_n\|^2&\leq 
\mu_{n_0}-\mu_{N+1}+2\sum_{n=n_0}^N\lambda_n\beta_n\left[\sup_{u\in M}\varphi_B\left(u,\frac{p}{\beta_n}\right)-\sigma_M\left(\frac{p}{\beta_n}\right)\right]\nonumber\\
&\leq \mu_{n_0}+\alpha\varphi_N+2\sum_{n=n_0}^N\lambda_n\beta_n\left[\sup_{u\in M}\varphi_B\left(u,\frac{p}{\beta_n}\right)-\sigma_M\left(\frac{p}{\beta_n}\right)\right]\nonumber\\
&\leq \mu_{n_0}+\alpha^{N-n_0+1}+\frac{C\alpha}{1-\alpha}+2\sum_{n=n_0}^N\lambda_n\beta_n\left[\sup_{u\in M}\varphi_B\left(u,\frac{p}{\beta_n}\right)-\sigma_M\left(\frac{p}{\beta_n}\right)\right].\nonumber\\
\end{align}
From $(H_{fitz})$ and \eqref{x-l2} we further get $\sum_{n\geq 0}\|x_{n+1}-x_n\|^2<+\infty$. 
Moreover, by taking into account the inequality \eqref{lem19} for $w=0$, \eqref{a_n}-\eqref{a-s} and $(H_{fitz})$, we conclude 
by applying Lemma \ref{ext-fejer2} that $\sum_{n\geq 1}\|x_n-p_n\|^2<+\infty$ and $(\|x_n-u\|)_{n \geq 0}$ is convergent.

(b) Next we prove that every weak cluster point of $(z_n')_{n \in \N}$, where
$$z_n':=\frac{1}{\tau_n}\sum_{k=1}^n\lambda_k p_k \ \mbox{and} \ \tau_n:= \sum_{k=1}^n\lambda_k \ \forall n \geq 1,$$
lies in $\zer(A+D+N_M)$.

Let $z$ be a sequential weak cluster point of $(z_n')_{n \geq 1}$. As we already noticed that $A+D+N_M$ is maximally monotone, in order 
to show that $z\in\zer(A+D+N_M)$ we will use the characterization given in \eqref{charact-zeros-max}.
Take $(u,w)\in\gr (A+D+N_M)$ such that $w=v+p+Du$, where $v\in Au$ and $p\in N_M(u)$. 
From \eqref{mu_n-1} we have $$\mu_{n+1}-\mu_n\leq
 2\lambda_n\beta_n\left[\sup_{u\in M}\varphi_B\left(u,\frac{p}{\beta_n}\right)-\sigma_M\left(\frac{p}{\beta_n}\right)\right]
 +2\lambda_n\langle u-p_n,w\rangle \ \forall n\geq n_0.$$

Let be $N\in\N$ with $N\geq n_0+2$. Summing up the above inequalities for $n=n_0+1,...,N$ we get
$$\mu_{N+1}-\mu_{n_0+1}\leq L+2\left\langle \sum_{n=1}^N\lambda_nu-\sum_{n=1}^N\lambda_np_n-\sum_{n=1}^{n_0}\lambda_nu+\sum_{n=1}^{n_0}\lambda_np_n,w\right\rangle,$$
where
\begin{align*}
L := \ 2\sum_{n \geq 1} \lambda_n\beta_n\left[\sup_{u\in M}\varphi_B\left(u,\frac{p}{\beta_n}\right)-\sigma_M\left(\frac{p}{\beta_n}\right)\right]
 \in \R.
\end{align*}

Dividing by $2\tau_N=2\sum_{k=1}^N\lambda_k$ we obtain
\begin{equation}\label{eqLtilde}
\frac{\mu_{N+1}-\mu_{n_0+1}}{2\tau_N}\leq \frac{\widetilde{L}}{2\tau_N}+\langle u-z_N',w\rangle,
\end{equation}
where $$\widetilde{L}:=L+2 \left \langle -\sum_{n=1}^{n_0}\lambda_nu+\sum_{n=1}^{n_0}\lambda_np_n,w \right \rangle \in \R.$$
Notice that due to (a), $(x_n)_{n\geq 0}$ is bounded, hence the sequence $(\mu_n)_{n\geq 1}$ is bounded as well. 
By passing in \eqref{eqLtilde} to limit as $N\rightarrow+\infty$ and by using that $\lim_{N\rightarrow+\infty}\tau_N=+\infty$, we get 
$$\liminf_{N\rightarrow+\infty}\langle u-z_N',w\rangle\geq 0.$$
Since $z$ is a sequential weak cluster point of $(z_n')_{n \geq 1}$, we obtain that $\langle u-z,w\rangle\geq 0$. 
Finally, as this inequality holds for arbitrary $(u,w)\in\gr (A+D+N_M)$, the desired conclusion follows.

(c) In the third part we show that every weak cluster point of $(z_n)_{n \geq 1}$ lies in $\zer(A+D+N_M)$.

For (c) it is enough to prove that $\lim_{n\rightarrow+\infty} \|z_n-z_n'\|=0$ and the statement will 
be a consequence of (b).

For any $n \geq 1$ it holds
\begin{align*}
\|z_n-z_n'\|^2= \frac{1}{\tau_n^2}\left\|\sum_{k=1}^n\lambda_k(x_k-p_k)\right\|^2 \!\!\!\leq \frac{1}{\tau_n^2}\left(\sum_{k=1}^n\lambda_k\|x_k-p_k\|\right)^2\!\!\! \leq \frac{1}{\tau_n^2}\left(\sum_{k=1}^n\lambda_k^2\right)\!\!\left(\sum_{k=1}^n\|x_k-p_k\|^2\right).
\end{align*}
Since $(\lambda_n)_{n \geq 1} \in \ell^2 \setminus \ell^1$, taking into consideration that 
$\tau_n = \sum_{k=1}^n \lambda_k \rightarrow +\infty$ as $n \rightarrow +\infty$ and $\sum_{n\geq 1}\|x_n-p_n\|^2<+\infty$, 
we obtain $\|z_n-z_n'\| \rightarrow 0$ as $n \rightarrow +\infty$. 

The statement (ii) of the theorem follows by combining the statements proved in (a) and (c) with Lemma \ref{opial-passty}. 

Finally, we prove (iii) and assume to this end that $A$ is $\gamma$-strongly monotone. Let be $u\in\zer(A+D+N_M)$ and $w=0=v+p+Du$, where $v\in Au$ and $p\in N_M(u)$. 
Following the lines of the proof of Lemma \ref{fbf-ineq1}, one obtains for any $n\geq n_0$
\begin{align}\label{lem19-2}
2\gamma\lambda_n\|p_n-u\|^2+\|x_{n+1}-u\|^2-\|x_n-u\|^2 \!\!\leq & \ \alpha_n(\|x-u\|^2-\|x_{n-1}-u\|^2) + 2\alpha_n\|x_n-x_{n-1}\|^2 \nonumber\\
& \ \--\left[1-\left(\frac{\lambda_n\beta_n}{\mu} + \frac{\lambda_n}{\eta} \right)^2-\alpha_n\right]\|x_n-p_n\|^2 \nonumber\\
& \  + \ 2\lambda_n\beta_n\left[\sup_{u\in M}\varphi_B\left(u,\frac{p}{\beta_n}\right)-\sigma_M\left(\frac{p}{\beta_n}\right)\right]\nonumber\\
\leq&  \ \alpha_n(\|x-u\|^2-\|x_{n-1}-u\|^2) + 2\alpha_n\|x_n-x_{n-1}\|^2 \nonumber\\
&  \  + \ 2\lambda_n\beta_n\left[\sup_{u\in M}\varphi_B\left(u,\frac{p}{\beta_n}\right)-\sigma_M\left(\frac{p}{\beta_n}\right)\right].
\end{align}

Invoking now Lemma \ref{ext-fejer2}, we obtain that 
$$\sum_{n \geq 1}\lambda_n\|p_n-u\|^2<+\infty.$$

Since $(\lambda_n)_{n \geq 1}$ is bounded from above and $\sum_{n \geq 1} \|x_n-p_n\|^2<+\infty$, it yields
\begin{align*}
\sum_{n=1}^\infty\lambda_n\|x_n-u\|^2 \leq 2\sum_{n=1}^\infty\lambda_n\|x_n-p_n\|^2+2\sum_{n=1}^\infty\lambda_n\|p_n-u\|^2 < +\infty.
\end{align*}
As $\sum_{n \geq 1} \lambda_n=+\infty$ and  $(\|x_n-u\|)_{n \geq 1}$ is convergent, it follows $\lim_{n\rightarrow+\infty}\|x_n-u\|=0$. 
This obviously implies $\lim_{n\rightarrow+\infty}\|p_n-u\|=0$, since $\lim_{n\rightarrow+\infty}\|x_n-p_n\|=0$. 
\end{proof}

\section{A primal-dual forward-backward-forward penalty algorithm with inertial effects}\label{sec3}

The aim of this section is to propose and investigate from the point of view of its convergence properties a forward-backward-forward penalty algorithm with inertial effects
for solving the following monotone inclusion problem involving linearly composed and parallel-sum type monotone operators.

\begin{problem}\label{pr-par-sum}
Let ${\cal H}$ be a real Hilbert space, $A:{\cal H}\rightrightarrows {\cal H}$ a maximally monotone operator and
$C:{\cal H}\rightarrow {\cal H}$ a monotone and $\nu$-Lipschitz continuous operator for $\nu>0$. Let $m$ be a strictly positive integer and for any
$i = 1,...,m$ let ${\cal G}_i$  be a real Hilbert space, $B_i:{\cal G }_i \rightrightarrows {\cal G}_i$ a
maximally monotone operator, $D_i:{\cal G}_i\rightrightarrows {\cal G}_i$ a monotone operator such that $D_i^{-1}$ is $\nu_i$-Lipschtz continuous
for $\nu_i>0$ and $L_i:{\cal H}\rightarrow$ ${\cal G}_i$ a nonzero linear continuous operator. Consider also $B:{\cal H}\rightarrow{\cal H}$ a
monotone and $\mu^{-1}$-Lipschitz continuous operator with $\mu>0$ and suppose that $M=\zer B \neq\emptyset$. The monotone inclusion problem to solve is
\begin{equation}\label{sum-k-primal-C-D}
0\in Ax+ \sum_{i=1}^{m}L_i^*(B_i\Box D_i)(L_ix)+Cx+N_M(x).
\end{equation}
\end{problem}

The algorithm we propose for solving this problem has the following form. 

\begin{algorithm}\label{alg-par-sum}$ $

\noindent\begin{tabular}{rl}
\verb"Initialization": & \verb"Choose" $(x_0,v_{1,0},...,v_{m,0}),(x_1,v_{1,1},...,v_{m,1})\in{\cal H} \times$ ${\cal G}_1 \times...\times {\cal G}_m$\\
\verb"For" $n \geq 1$ \verb"set": & $p_n=J_{\lambda_n A}[x_n-\lambda_n(Cx_n+\sum_{i=1}^mL_i^*v_{i,n})-\lambda_n\beta_nBx_n+\alpha_n(x_n-x_{n-1})]$\\
                                  & $q_{i,n}=J_{\lambda_n B_i^{-1}}[v_{i,n}+\lambda_n(L_ix_n-D_i^{-1}v_{i,n})+\alpha_n(v_{i,n}-v_{i,n-1})]$, $i=1,...,m$\\
                                  &  $x_{n+1}=\lambda_n\beta_n(Bx_n-Bp_n)+\lambda_n(Cx_n-Cp_n)$\\
                                  &  \hspace{4.3cm} $+\lambda_n\sum_{i=1}^mL_i^*(v_{i,n}-q_{i,n})+p_n$\\
                                  & $v_{i,n+1}=\!\lambda_n L_i(p_n-x_n)+\!\lambda_n(D_i^{-1}v_{i,n}-D_i^{-1}q_{i,n}) +q_{i,n}, i=1,...,m$,
\end{tabular}
\end{algorithm}
where $(\lambda_n)_{n \geq 1}$, $(\beta_n)_{n \geq 1}$ and $(\alpha_n)_{n\geq 1}$ are sequences of positive real numbers.

\begin{remark} In case $Bx=0$ for all $x\in\cal H$, the above numerical scheme becomes the inertial algorithm that have been studied in \cite{b-c-inertial} in connection with the 
solving of the monotone inclusion problem 
\begin{equation*}
0\in Ax+ \sum_{i=1}^{m}L_i^*(B_i\Box D_i)(L_ix)+Cx.
\end{equation*}

If, additionally, $\alpha_n=0$ for all $n\geq 1$, then the algorithm collapses into the error-free variant of the primal-dual iterative  scheme formulated in \cite[Theorem 3.1]{combettes-pesquet}. 
\end{remark}

For the convergence result we need the following additionally hypotheses (we refer the reader to the remarks \ref{cond} and 
\ref{cond-subdiff} for sufficient conditions guaranteeing $(H_{fitz}^{par-sum})$):

$$(H_{fitz}^{par-sum})\left\{
\begin{array}{lll}
(i) \ A+N_M \mbox{ is maximally monotone and }\\
\zer\big(A+\sum_{i=1}^{m}L_i^*\circ(B_i\Box D_i)\circ L_i +C +N_M\big)\neq\emptyset;\\
(ii) \ \mbox{For every }p\in\ran N_M, \!\sum\limits_{n \geq 1} \!\lambda_n\beta_n\left[\sup\limits_{u\in M}\varphi_B\left(u,\frac{p}{\beta_n}\right)-\sigma_M\left(\frac{p}{\beta_n}\right)\right]<+\infty;\\
(iii) \ (\lambda_n)_{n \geq 1} \in\ell^2\setminus\ell^1.\end{array}\right.$$

For proving the convergence of the sequences generated by Algorithm \ref{alg-par-sum} we will make use of a product space approach, which relies on the reformulation of Problem \ref{pr-par-sum} in 
the same form as Problem \ref{pr-Lip-single-val}.

\begin{theorem}\label{fbf-par-sum} Let be the sequences generated by Algorithm \ref{alg-par-sum}
and let $(z_n)_{n \geq 1}$ the be sequence defined in \eqref{average}. 
Assume that $(H_{fitz}^{par-sum})$ is fulfilled, $(\alpha_n)_{n\geq 1}$ 
is nondecreasing and there exist $n_0\geq 1$, $\alpha\geq 0$ and $\sigma>0$ such that for any $n \geq n_0$\begin{equation}\label{a_n-sum}0\leq\alpha_n\leq \alpha \ \forall n\geq n_0\end{equation} and 
\begin{equation}\label{a-s-sum}5\alpha+2\sigma+(1+4\alpha+2\sigma)\left (\frac{\lambda_n\beta_n}{\mu} + 
\lambda_n\beta \right)^2\leq 1 \ \forall n\geq n_0,\end{equation}
where 
$$\beta=\max\{\nu,\nu_1,...,\nu_m\}+\sqrt{\sum_{i=1}^m\|L_i\|^2}.$$
Then $(z_n)_{n \geq 1}$ converges weakly to an element in $\zer\big(A+\sum_{i=1}^{m}L_i^*\circ(B_i\Box D_i)\circ L_i +C +N_M\big)$ 
as $n\rightarrow+\infty$. If, additionally, $A$ and $B_i^{-1}$, $i=1,...,m,$ are strongly monotone,
then $(x_n)_{n \geq 1}$ converges strongly to the unique element in 
$\zer\big(A+\sum_{i=1}^{m}L_i^*\circ(B_i\Box D_i)\circ L_i +C +N_M\big)$ as $n\rightarrow+\infty$.
\end{theorem}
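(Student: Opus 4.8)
The plan is to recast Problem~\ref{pr-par-sum} into the format of Problem~\ref{pr-Lip-single-val} by a product space argument and then to quote Theorem~\ref{fbf-conv} verbatim. Work on ${\cal K}:={\cal H}\times{\cal G}_1\times\cdots\times{\cal G}_m$, equipped with the sum of the factor inner products, and write a generic element as $\mathbf{x}=(x,v_1,\dots,v_m)$. Introduce the operators $\mathbf{A}:{\cal K}\rightrightarrows{\cal K}$, $\mathbf{A}\mathbf{x}=Ax\times B_1^{-1}v_1\times\cdots\times B_m^{-1}v_m$, together with the single-valued map
$$\mathbf{D}\mathbf{x}=\Big(Cx+\sum_{i=1}^mL_i^*v_i,\ D_1^{-1}v_1-L_1x,\ \dots,\ D_m^{-1}v_m-L_mx\Big)$$
and $\mathbf{B}\mathbf{x}=(Bx,0,\dots,0)$. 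Since $\mathbf{M}:=\zer\mathbf{B}=M\times{\cal G}_1\times\cdots\times{\cal G}_m$, one has $N_{\mathbf{M}}\mathbf{x}=N_M(x)\times\{0\}\times\cdots\times\{0\}$, so that the inclusion $0\in\mathbf{A}\mathbf{x}+\mathbf{D}\mathbf{x}+N_{\mathbf{M}}\mathbf{x}$ has exactly the shape of Problem~\ref{pr-Lip-single-val}. Expanding it componentwise reproduces the primal-dual system $0\in Ax+\sum_{i=1}^mL_i^*v_i+Cx+N_M(x)$ coupled with $v_i\in(B_i\Box D_i)(L_ix)$, $i=1,\dots,m$; hence the primal slot of any zero solves \eqref{sum-k-primal-C-D}, and conversely each solution of \eqref{sum-k-primal-C-D} lifts, via appropriate dual variables, to a zero of $\mathbf{A}+\mathbf{D}+N_{\mathbf{M}}$.

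I would then check the structural requirements of Problem~\ref{pr-Lip-single-val}. Writing $\mathbf{D}$ as the sum of its diagonal part $(Cx,D_1^{-1}v_1,\dots,D_m^{-1}v_m)$, monotone and Lipschitz with constant $\max\{\nu,\nu_1,\dots,\nu_m\}$, and its skew-adjoint linear part $(\sum_iL_i^*v_i,-L_1x,\dots,-L_mx)$, monotone with norm at most $\sqrt{\sum_{i=1}^m\|L_i\|^2}$, shows that $\mathbf{D}$ is monotone and $\beta$-Lipschitz continuous, while $\mathbf{B}$ is monotone and $\mu^{-1}$-Lipschitz continuous. The operator $\mathbf{A}+N_{\mathbf{M}}$ is maximally monotone, being the direct product of the maximally monotone operators $A+N_M$ and $B_i^{-1}$, $i=1,\dots,m$; combined with the lifting above this gives $(H_{fitz})(i)$ from $(H_{fitz}^{par-sum})(i)$. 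With $\eta:=\beta^{-1}$, condition \eqref{a-s-sum} becomes precisely \eqref{a-s} for the product data, and $(H_{fitz}^{par-sum})(iii)$ is $(H_{fitz})(iii)$.

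The next step is to observe that Algorithm~\ref{alg-par-sum} is nothing but Algorithm~\ref{alg-fbf} run on ${\cal K}$. Indeed, $J_{\lambda_n\mathbf{A}}$ splits as $J_{\lambda_nA}$ on the primal slot and $J_{\lambda_nB_i^{-1}}$ on the $i$-th dual slot, so writing $\mathbf{x}_n=(x_n,v_{1,n},\dots,v_{m,n})$ and $\mathbf{p}_n=(p_n,q_{1,n},\dots,q_{m,n})$ and expanding the two lines of Algorithm~\ref{alg-fbf} reproduces slot by slot the four update rules of Algorithm~\ref{alg-par-sum}. It remains to transport the Fitzpatrick condition. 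Because $\mathbf{B}$ vanishes on the dual slots, for any $\mathbf{p}=(p,0,\dots,0)\in\ran N_{\mathbf{M}}$ one checks that $\sigma_{\mathbf{M}}(\mathbf{p}/\beta_n)=\sigma_M(p/\beta_n)$ and
$$\sup_{\mathbf{u}\in\mathbf{M}}\varphi_{\mathbf{B}}\Big(\mathbf{u},\frac{\mathbf{p}}{\beta_n}\Big)=\sup_{u\in M}\varphi_B\Big(u,\frac{p}{\beta_n}\Big),$$
so $(H_{fitz}^{par-sum})(ii)$ coincides with $(H_{fitz})(ii)$ for the product inclusion. All hypotheses of Theorem~\ref{fbf-conv} being met, its part~(ii) yields weak convergence of $(z_n)_{n\geq1}$ to a zero of $\mathbf{A}+\mathbf{D}+N_{\mathbf{M}}$, whose primal slot lies in $\zer\big(A+\sum_{i=1}^mL_i^*\circ(B_i\Box D_i)\circ L_i+C+N_M\big)$, as claimed.

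For the last assertion, if $A$ and each $B_i^{-1}$ are strongly monotone then $\mathbf{A}$ is strongly monotone on ${\cal K}$, so Theorem~\ref{fbf-conv}(iii) gives strong convergence of $(\mathbf{x}_n)_{n\geq0}$, hence of its primal slot $(x_n)_{n\geq0}$, to the unique zero; uniqueness of the primal component follows from strong monotonicity of the product inclusion. The hard part is purely the bookkeeping: verifying the slot-by-slot identity between the two algorithms and computing $\varphi_{\mathbf{B}}$ and $\sigma_{\mathbf{M}}$ on $\ran N_{\mathbf{M}}$. Once these identifications are settled, every conclusion is inherited directly from Theorem~\ref{fbf-conv}, with no new convergence analysis required.
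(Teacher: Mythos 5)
Your proposal is correct and follows essentially the same route as the paper's proof: the same product space ${\cal H}\times{\cal G}_1\times\cdots\times{\cal G}_m$, the same three operators $\widetilde A$, $\widetilde D$, $\widetilde B$ with the identical verification that $\widetilde D$ is monotone and $\beta$-Lipschitz, the same slot-by-slot identification of Algorithm \ref{alg-par-sum} with Algorithm \ref{alg-fbf}, the same computation of $\varphi_{\widetilde B}$ and $\sigma_{\widetilde M}$ to transport condition (ii), and finally an appeal to Theorem \ref{fbf-conv}. No gaps; the bookkeeping you flag as the hard part is exactly what the paper carries out.
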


\begin{proof} The proof makes use of similar techniques as in \cite{bc-viet}, however, for the sake of completeness, we provide as follows the necessary details. 

We start by noticing that $x\in{\cal H}$ is a solution to Problem \ref{pr-par-sum} if and only if there exist
$ v_1 \in {\cal G}_1,...,v_m \in {\cal G}_m$ such that
\begin{equation}\label{sum-k-dual-C-D}
 \
\left\{
\begin{array}{ll}
0\in Ax+\sum_{i=1}^{m}L_i^*v_i+Cx+N_M(x)\\
v_i\in (B_i\Box D_i)(L_ix), i=1,...,m,
\end{array}\right.
\end{equation}

which is nothing else than

\begin{equation}\label{sum-k-dual-C-D2}
 \
\left\{
\begin{array}{ll}
0\in Ax+\sum_{i=1}^{m}L_i^*v_i+Cx+N_M(x)\\
0\in B_i^{-1}v_i+D_i^{-1}v_i-L_ix, i=1,...,m.
\end{array}\right.
\end{equation}

We further endow the product space ${\cal H} \times$ ${\cal G}_1 \times...\times {\cal G}_m$ with inner product and associated norm defined for all
$(x,v_1,...,v_m), (y,w_1,...,w_m)\in {\cal H} \times$ ${\cal G}_1 \times...\times {\cal G}_m$ as
$$\langle (x,v_1,...,v_m),(y,w_1,...,w_m)\rangle=\langle x,y\rangle+\sum_{i=1}^m\langle v_i,w_i\rangle$$ and
$$\|(x,v_1,...,v_m)\|=\sqrt{\|x\|^2+\sum_{i=1}^m\|v_i\|^2},$$
respectively.

We introduce the operators $\widetilde A:{\cal H} \times$ ${\cal G}_1 \times...\times {\cal G}_m\rightrightarrows{\cal H} \times$ ${\cal G}_1 \times...\times {\cal G}_m$
$$\widetilde A(x,v_1,...,v_m)=Ax\times B_1^{-1}v_1\times....\times B_m^{-1}v_m,$$
$\widetilde D: {\cal H} \times$ ${\cal G}_1 \times...\times {\cal G}_m\rightarrow{\cal H} \times$ ${\cal G}_1 \times...\times {\cal G}_m$,
$$\widetilde D(x,v_1,...,v_m)=\Big(\sum_{i=1}^mL_i^*v_i+Cx,D_1^{-1}v_1-L_1x,...,D_m^{-1}v_m-L_mx\Big)$$
and $\widetilde B: {\cal H} \times$ ${\cal G}_1 \times...\times {\cal G}_m\rightarrow{\cal H} \times$ ${\cal G}_1 \times...\times {\cal G}_m$,
$$\widetilde B(x,v_1,...,v_m)=(Bx,0,...,0).$$

Notice that, since $A$ and $B_i$, $i=1,...,m$ are maximally monotone, $\widetilde A$ is maximally monotone, too
(see \cite[Props. 20.22, 20.23]{bauschke-book}). Further, as it was done in \cite[Theorem 3.1]{combettes-pesquet}, one can show that $\widetilde D$  is a
monotone and $\beta$-Lipschitz continuous operator. 

Indeed, let be $(x,v_1,...,v_m),(y,w_1,...,w_m)\in {\cal H} \times$ ${\cal G}_1 \times...\times {\cal G}_m$. By using the monotonicity of $C$ and $D_i^{-1}$, $i=1,...,m$. we have
\begin{align*}
\langle (x,v_1,...,v_m) & -(y,w_1,...,w_m), \widetilde D (x,v_1,...,v_m)- \widetilde D (y,w_1,...,w_m)\rangle\\
& =\langle x-y,Cx-Cy\rangle+\sum_{i=1}^m\langle v_i-w_i,D_i^{-1}v_i-D_i^{-1}w_i\rangle\\
& +\sum_{i=1}^m(\langle x-y,L_i^*(v_i-w_i)\rangle
-\langle v_i-w_i,L_i(x-y)\rangle)\geq 0,
\end{align*}
which shows that $\widetilde D$ is monotone.

The Lipschitz continuity of $\widetilde D$ follows by noticing that
\begin{align*}
& \left\|\widetilde D(x,v_1,...,v_m)-\widetilde D(y,w_1,...,w_m)\right\|\\
\leq & \left\|\left(Cx-Cy,D_1^{-1}v_1-D_1^{-1}w_1,...,D_m^{-1}v_m-D_m^{-1}w_m\right)\right\|\\
& +\left\|\left(\sum_{i=1}^m L_i^*(v_i-w_i),-L_1(x-y),...,-L_m(x-y)\right)\right\|\\
\leq & \sqrt{\nu^2\|x-y\|^2 +\sum_{i=1}^m\nu_i^2\|v_i-w_i\|^2}+\sqrt{\left(\sum_{i=1}^m\|L_i\|\cdot\|v_i-w_i\|\right)^2+\sum_{i=1}^m\|L_i\|^2\cdot\|x-y\|^2}\\
\leq & \beta\|(x,v_1,...,v_m)-(y,w_1,...,w_m)\|.
\end{align*}

Moreover, $\widetilde B$ is monotone, $\mu^{-1}$-Lipschitz continuous and
$$\zer \widetilde B=\zer B\times {\cal G}_1 \times...\times {\cal G}_m=M\times {\cal G}_1 \times...\times {\cal G}_m,$$
hence
$$N_{\widetilde M}(x,v_1,...,v_m)=N_M(x)\times\{0\}\times...\times\{0\},$$
where
$$\widetilde M=M\times {\cal G}_1 \times...\times {\cal G}_m=\zer\widetilde B.$$

Taking into consideration \eqref{sum-k-dual-C-D2}, we obtain that $x\in{\cal H}$ is a solution to Problem \ref{pr-par-sum} if and only if there exist
$v_1 \in {\cal G}_1,...,v_m \in {\cal G}_m$ such that $$(x,v_1,...,v_m)\in\zer (\widetilde A+\widetilde D+N_{\widetilde M}).$$
Conversely, when $(x,v_1,...,v_m)\in\zer (\widetilde A+\widetilde D+N_{\widetilde M})$, then one obviously has $x\in \zer\big(A+\sum_{i=1}^{m}L_i^*\circ(B_i\Box D_i)\circ L_i +C +N_M\big)$.
This means that determining the zeros of $\widetilde A+\widetilde D+N_{\widetilde M}$ will automatically provide a solution to Problem \ref{pr-par-sum}.

Further, notice that 
$$J_{\lambda \widetilde A}(x,v_1,...,v_m)=\left(J_{\lambda A_1}(x),J_{\lambda B_1^{-1}}(v_1),...,J_{\lambda B_m^{-1}}(v_m)\right)$$
for every $(x,v_1,...,v_m)\in{\cal H} \times$ ${\cal G}_1 \times...\times {\cal G}_m$ and every $\lambda > 0$ 
(see \cite[Proposition 23.16]{bauschke-book}). Thus the iterations of Algorithm \ref{alg-par-sum} read for any $n \geq 1$:
$$\left\{
\begin{array}{l}
(p_n,q_{1,n},...,q_{m,n})=J_{\lambda_n \widetilde A}\left[(x_n,v_{1,n},...,v_{m,n})-\lambda_n \widetilde D(x_n,v_{1,n},...,v_{m,n}) \right.\\
\hspace{2cm} \left. - \lambda_n \beta_n\widetilde{B} (x_n,v_{1,n},...,v_{m,n})+\alpha_n\big((x_n,v_{1,n},...,v_{m,n})-(x_{n-1},v_{1,n-1},...,v_{m,n-1})\big)\right]\\
(x_{n+1},v_{1,n+1},...,v_{m,n+1})=\lambda_n\beta_n\left[\widetilde{B}(x_n,v_{1,n},...,v_{m,n})-\widetilde{B}(p_n,q_{1,n},...,q_{m,n})\right]\\
\hspace{2cm} +\lambda_n\left[\widetilde D(x_n,v_{1,n},...,v_{m,n})-\widetilde D(p_n,q_{1,n},...,q_{m,n})\right]+(p_n,q_{1,n},...,q_{m,n}),
\end{array}\right.$$
which is nothing else than the iterative scheme of Algorithm \ref{alg-fbf} employed to the solving of the monotone inclusion problem
$$0 \in \widetilde Ax+\widetilde Dx+N_{\widetilde M}(x).$$

In order to compute the Fitzpatrick function of $\widetilde B$, we consider two arbitrary elements 
$(x,v_1,...,v_m)$, $(x',v_1',...,v_m') \in {\cal H} \times$ ${\cal G}_1 \times...\times {\cal G}_m$. It holds
\begin{align*}
& \varphi_{\widetilde B}\left((x,v_1,...,v_m),(x',v_1',...,v_m')\right)=\\
& \sup_{\substack{(y,w_1,...,w_m) \in \\ {\cal H} \times {\cal G}_1 \times...\times {\cal G}_m}}\Big\{\langle (x,v_1,...,v_m),\widetilde B(y,w_1,...,w_m)\rangle+\langle (x',v_1',...,v_m'),(y,w_1,...,w_m)\rangle\\
& \qquad \qquad \qquad -\langle (y,w_1,...,w_m),\widetilde B(y,w_1,...,w_m)\rangle\Big\}\\
& =\sup_{\substack{(y,w_1,...,w_m) \in \\ {\cal H} \times {\cal G}_1 \times...\times {\cal G}_m}} \left\{\langle x, By\rangle+\langle x', y\rangle+\sum_{i=1}^m\langle v_i', w_i\rangle-\langle y, By\rangle\right\},
\end{align*}
 thus
$$\varphi_{\widetilde{B}}\big((x,v_1,...,v_m),(x',v_1',...,v_m')\big)=\left\{
\begin{array}{ll}
\varphi_B(x,x'), & \mbox {if } v_1'=...=v_m'=0,\\
+\infty, & \mbox{otherwise.}
\end{array}\right.$$

Moreover, $$\sigma_{\widetilde{M}}(x,v_1,...,v_m)=\left\{
\begin{array}{ll}
\sigma_M(x), & \mbox {if } v_1=...=v_m=0,\\
+\infty, & \mbox{otherwise,}
\end{array}\right.$$
hence condition (ii) in $(H_{fitz}^{par-sum})$ is nothing else than
\begin{align*}
& \mbox{for each} \ (p,p_1,...,p_m)\in\ran N_{\widetilde M}=\ran N_M\times\{0\}\times...\times\{0\}, \\
& \sum_{n \geq 1} \lambda_n\beta_n\!\left[\sup\limits_{(u,v_1,...,v_m)\in \widetilde M}\varphi_{\widetilde B}\left((u,v_1,...,v_m),\frac{(p,p_1,...,p_m)}{\beta_n}\right)
-\sigma_{\widetilde M}\left(\frac{(p,p_1,...,p_m)}{\beta_n}\right)\right]\!<\!+\infty.
\end{align*}
Moreover, condition (i) in $(H_{fitz}^{par-sum})$ ensures that $\widetilde A+N_{\widetilde M}$ is
maximally monotone and $\zer(\widetilde A+\widetilde D+N_{\widetilde M})\neq\emptyset$. Hence, we are in the position of applying
Theorem \ref{fbf-conv} in the context of finding the zeros of $\widetilde A+\widetilde D+N_{\widetilde M}$. The statements of the theorem are an easy consequence of this result.
\end{proof}

\section{Convex minimization problems}\label{sec4}

In this section we deal with the minimization of a
complexly structured convex objective function subject to the set of minima of another convex and differentiable function with Lipschitz continuous gradient. We show how the 
results obtained in the previous section for monotone inclusion problems can be applied in this context. 

\begin{problem}\label{pr-opt}
Let ${\cal H}$ be a real Hilbert space, $f\in\Gamma({\cal H})$ and $h:{\cal H}\rightarrow \R$ be a convex and differentiable
function with a $\nu$-Lipschitz continuous gradient for $\nu>0$. Let $m$ be a strictly positive integer and for any $i = 1,...,m$ let ${\cal G}_i$
be a real Hilbert space, $g_i, l_i \in\Gamma({\cal G}_i)$ such that $l_i$ is $\nu_i^{-1}$-strongly convex for $\nu_i > 0$ and
$L_i:{\cal H}\rightarrow$ ${\cal G}_i$ a nonzero linear continuous operator. Further, let  $\Psi\in\Gamma (\cal H)$ be differentiable with a $\mu^{-1}$-Lipschitz continuous gradient, fulfilling $\min \Psi=0$.
The convex minimization problem under investigation is
\begin{equation}\label{opt}
\inf_{x\in \argmin \Psi}\left\{f(x)+\sum_{i=1}^{m}(g_i \Box l_i)(L_ix)+h(x)\right\}.
\end{equation}
\end{problem}

Consider the maximal monotone operators
$$A=\partial f, B= \nabla\Psi, C=\nabla h, B_i=\partial g_i \ \mbox{and} \ D_i=\partial l_i, i=1,...,m.$$
According to \cite[Proposition 17.10, Theorem 18.15]{bauschke-book}, $D_i^{-1} = \nabla l_i^*$ is a monotone
and $\nu_i$-Lipschitz continuous operator for $i=1,...,m$. Moreover, $B$ is a monotone and $\mu^{-1}$-Lipschitz continuous operator and
$$M:=\argmin \Psi = \zer B.$$

Taking into account the sum rules of the convex subdifferential, every element of
$\zer\big(\partial f+\sum_{i=1}^{m}L_i^*\circ(\partial g_i\Box \partial l_i)\circ L_i +\nabla h +N_M\big)$ is an optimal solution of \eqref{opt}. The converse is true if an appropriate qualification condition is satisfied.
For the readers convenience, we present the following qualification condition
of interiority-type (see, for instance, \cite[Proposition 4.3, Remark 4.4]{combettes-pesquet})
\begin{equation}\label{reg-cond} (0,...,0)\in\sqri\left(\prod_{i=1}^{m}(\dom g_i+\dom l_i)-\{(L_1x,...,L_mx):x\in \dom f\cap M\}\right).
\end{equation}
The condition \eqref{reg-cond} is fulfilled in one of the following circumstances:

(i) $\dom g_i+\dom l_i={\cal{G}}_i$, $i=1,...,m$; 

(ii) ${\cal H}$ and
${\cal{G}}_i$ are finite-dimensional and there exists $x\in\ri\dom f\cap\ri M$ such that $L_ix\in\ri\dom g_i+\ri\dom l_i$, $i=1,...,m$
(see \cite[Proposition 4.3]{combettes-pesquet}).

Algorithm \ref{alg-par-sum} becomes in this particular case

\begin{algorithm}\label{alg-opt}$ $

\noindent\begin{tabular}{rl}
\verb"Initialization": & \verb"Choose" $(x_0,v_{1,0},...,v_{m,0}),(x_1,v_{1,1},...,v_{m,1})\in{\cal H} \times$ ${\cal G}_1 \times...\times {\cal G}_m$\\
\verb"For" $n \geq 1$ \verb"set": & $p_n=\prox _{\lambda_n f}[x_n-\lambda_n(\nabla h(x_n)+\sum_{i=1}^mL_i^*v_{i,n})-\lambda_n\beta_n\nabla\Psi (x_n)$\\
&$\hspace{2.1cm} +\alpha_n(x_n-x_{n-1})]$\\
                                  & $q_{i,n}\!=\prox _{\lambda_n g_i^*}\![v_{i,n}+\!\lambda_n(L_ix_n-\nabla l_i^*(v_{i,n}))\!+\alpha_n(v_{i,n}-v_{i,n-1})], \!i=1,...,m$\\
                                  &  $x_{n+1}=\lambda_n\beta_n(\nabla\Psi (x_n)-\nabla\Psi (p_n))+\lambda_n(\nabla h(x_n)-\nabla h(p_n))$\\
                                  & $\hspace{1.2cm}+\lambda_n\sum_{i=1}^mL_i^*(v_{i,n}-q_{i,n})+p_n$\\
                                  & $v_{i,n+1}=\!\lambda_n L_i(p_n-x_n)+\!\lambda_n(\!\nabla l_i^*(v_{i,n})-\!\nabla l_i^*(q_{i,n})) +\!q_{i,n}, i=1,...,m.$
\end{tabular}
\end{algorithm}

For the convergence result we need the following hypotheses:
$$(H_{fitz}^{opt})\left\{
\begin{array}{lll}
(i) \ \partial f+N_M \mbox{ is maximally monotone and} \ \eqref{opt} \ \mbox{has an optimal solution}; \\
(ii) \ \mbox{ For every }p\in\ran N_M, \sum_{n \geq 0} \lambda_n\beta_n\left[\Psi^*\left(\frac{p}{\beta_n}\right)-\sigma_M\left(\frac{p}{\beta_n}\right)\right]<+\infty;\\
(iii) \ (\lambda_n)_{n \geq 1} \in\ell^2\setminus\ell^1.\end{array}\right.$$

\begin{remark}\label{cond-subdiff} (a) Let us mention that $\partial f+N_M$ is maximally monotone, if $0\in \sqri(\dom f-M)$, a condition which is
fulfilled if, for instance, $f$ is continuous at a point in $\dom f\cap M$ or $\inte M\cap\dom f\neq\emptyset$.

(b) Since $\Psi(x)=0$ for all $x\in M$, by \eqref{fitzp-subdiff-ineq} it follows that whenever (ii) in
$(H_{fitz}^{opt})$ holds, condition (ii) in $(H_{fitz}^{par-sum})$, formulated for $B= \nabla\Psi$, is also true.

(c) The hypothesis (ii) is satisfied, if $\sum_{n \geq 1} \frac{\lambda_n}{\beta_n}<+\infty$ and $\Psi$ is bounded
below by a multiple of the square of the distance to $C$ (see \cite{att-cza-peyp-p}). This is for instance the case when
$M=\zer L = \{x \in {\cal H}: Lx=0\}$, $L : {\cal H} \rightarrow {\cal H}$ is a linear continuous operator with closed range and
$\Psi : {\cal H} \rightarrow \R, \Psi(x)=\|Lx\|^2$ (see \cite{att-cza-peyp-p, att-cza-peyp-c}). For further situations for which condition
(ii) is fulfilled we refer to \cite[Section 4.1]{att-cza-peyp-c}.
\end{remark}

We are able now to formulate the convergence result.

\begin{theorem}\label{fbf-opt} Let be the sequences generated by Algorithm \ref{alg-opt}
and let $(z_n)_{n \geq 1}$ the be sequence defined in \eqref{average}. 
Assume that $(H_{fitz}^{opt})$ is fulfilled, $(\alpha_n)_{n\geq 1}$ 
is nondecreasing and there exist $n_0\geq 1$, $\alpha\geq 0$ and $\sigma>0$ such that for any $n \geq n_0$\begin{equation}\label{a_n-sum-opt}0\leq\alpha_n\leq \alpha \ \forall n\geq n_0\end{equation} and 
\begin{equation}\label{a-s-sum-opt}5\alpha+2\sigma+(1+4\alpha+2\sigma)\left (\frac{\lambda_n\beta_n}{\mu} + 
\lambda_n\beta \right)^2\leq 1 \ \forall n\geq n_0,\end{equation}
where 
$$\beta=\max\{\nu,\nu_1,...,\nu_m\}+\sqrt{\sum_{i=1}^m\|L_i\|^2}.$$
Then $(z_n)_{n \geq 1}$ converges weakly to an optimal solution to \eqref{opt} as $n\rightarrow+\infty$.
If, additionally, $f$ and $g_i^*$, $i=1,...,m$, are strongly convex,
then $(x_n)_{n \geq 1}$ converges strongly to the unique optimal solution of \eqref{opt} as $n\rightarrow+\infty$.
\end{theorem}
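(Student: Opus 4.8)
The plan is to realize Algorithm \ref{alg-opt} as the special instance of Algorithm \ref{alg-par-sum} obtained for the operator choices $A=\partial f$, $C=\nabla h$, $B_i=\partial g_i$, $D_i=\partial l_i$ and $B=\nabla\Psi$ announced before the statement, and then to invoke Theorem \ref{fbf-par-sum} essentially verbatim. First I would record that these choices meet the standing assumptions of Problem \ref{pr-par-sum}: $\partial f$ and $\partial g_i$ are maximally monotone since $f\in\Gamma({\cal H})$, $g_i\in\Gamma({\cal G}_i)$; $C=\nabla h$ is monotone and $\nu$-Lipschitz; $l_i$ being $\nu_i^{-1}$-strongly convex forces $l_i^*$ to have $\nu_i$-Lipschitz gradient, so $D_i^{-1}=\nabla l_i^*$ is monotone and $\nu_i$-Lipschitz; and $B=\nabla\Psi$ is monotone and $\mu^{-1}$-Lipschitz with $M=\argmin\Psi=\zer\nabla\Psi=\zer B$. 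Since $J_{\lambda_n\partial f}=\prox_{\lambda_n f}$ and $J_{\lambda_n\partial g_i^{-1}}=J_{\lambda_n\partial g_i^*}=\prox_{\lambda_n g_i^*}$, the iteration of Algorithm \ref{alg-par-sum} collapses line by line to that of Algorithm \ref{alg-opt}, so the two schemes generate the same sequences.

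Next I would verify that $(H_{fitz}^{opt})$ implies $(H_{fitz}^{par-sum})$ for this data. Part (iii) is identical. For part (i), the maximal monotonicity of $\partial f+N_M$ is assumed directly, while the solvability of \eqref{opt} together with the subdifferential sum rule under the qualification condition \eqref{reg-cond} turns an optimal solution into a zero of $\partial f+\sum_{i=1}^m L_i^*\circ(\partial g_i\Box\partial l_i)\circ L_i+\nabla h+N_M$, which gives the required nonemptiness. The decisive point is the transfer of part (ii), already isolated in Remark \ref{cond-subdiff}(b): for $B=\nabla\Psi=\partial\Psi$ the inequality \eqref{fitzp-subdiff-ineq} yields $\varphi_{\partial\Psi}(u,p/\beta_n)\leq\Psi(u)+\Psi^*(p/\beta_n)$, and since $\min\Psi=0$ gives $\Psi(u)=0$ for every $u\in M$, taking the supremum over $u\in M$ produces $\sup_{u\in M}\varphi_B(u,p/\beta_n)\leq\Psi^*(p/\beta_n)$. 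As the bracketed quantity in (ii) of $(H_{fitz}^{par-sum})$ is nonnegative and dominated termwise by the bracket in (ii) of $(H_{fitz}^{opt})$ (both carry the same $\sigma_M(p/\beta_n)$ correction and the same nonnegative weight $\lambda_n\beta_n$), the $\Psi^*$-summability assumed in $(H_{fitz}^{opt})$ forces the $\varphi_B$-summability required by $(H_{fitz}^{par-sum})$.

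With all three hypotheses verified and the step-size conditions \eqref{a_n-sum-opt}--\eqref{a-s-sum-opt} in force (these are literally the conditions of Theorem \ref{fbf-par-sum}, with the same $\beta$), I would apply Theorem \ref{fbf-par-sum} to obtain weak convergence of $(z_n)_{n\geq 1}$ to an element of $\zer\big(\partial f+\sum_{i=1}^m L_i^*\circ(\partial g_i\Box\partial l_i)\circ L_i+\nabla h+N_M\big)$; by the always-valid inclusion recorded after Problem \ref{pr-opt}, every such element is an optimal solution of \eqref{opt}, which settles the first assertion. For the strong-convergence statement I would note that strong convexity of $f$ makes $A=\partial f$ strongly monotone and strong convexity of each $g_i^*$ makes $B_i^{-1}=\partial g_i^*$ strongly monotone, which are exactly the extra hypotheses of the second part of Theorem \ref{fbf-par-sum}, yielding strong convergence of $(x_n)_{n\geq 1}$ to the unique optimal solution.

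Everything here is routine once the reduction is in place; the only genuinely substantive ingredient is the translation of condition (ii), replacing the Fitzpatrick function of $\nabla\Psi$ by the conjugate $\Psi^*$. This is the step where the structural assumption $\min\Psi=0$ is indispensable, and I expect it to be the main (though mild) obstacle, as it is the hinge connecting the abstract monotone-inclusion hypothesis to the familiar optimization quantities $\Psi^*$ and $\sigma_M$.
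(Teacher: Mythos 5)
Your proposal is correct and is essentially the paper's own argument: the paper proves Theorem \ref{fbf-opt} precisely by the reduction $A=\partial f$, $C=\nabla h$, $B_i=\partial g_i$, $D_i=\partial l_i$, $B=\nabla\Psi$ to Theorem \ref{fbf-par-sum}, using Remark \ref{cond-subdiff}(b) (i.e.\ inequality \eqref{fitzp-subdiff-ineq} together with $\Psi\equiv 0$ on $M$) to pass from condition (ii) of $(H_{fitz}^{opt})$ to condition (ii) of $(H_{fitz}^{par-sum})$, and the subdifferential sum rules to identify zeros of the composed operator with optimal solutions of \eqref{opt}. If anything, you are slightly more careful than the paper in flagging that the qualification condition \eqref{reg-cond} is what converts an optimal solution into a zero, so that the nonemptiness hypothesis of $(H_{fitz}^{par-sum})$(i) is actually met.
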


\begin{remark} (a) According to \cite[Proposition 17.10, Theorem 18.15]{bauschke-book}, for a function $g \in \Gamma({\cal H})$ one has that $g^*$ is strongly convex if and only if $g$ is differentiable with Lipschitz continuous gradient.

(b) Notice that in case $\Psi (x)=0$ for all $x\in\cal H$ Algorithm \ref{alg-opt} has been studied in 
\cite{b-c-inertial} in connection with the solving of the optimization problem \begin{equation}\label{opt2}
\inf_{x\in \cal H}\left\{f(x)+\sum_{i=1}^{m}(g_i \Box l_i)(L_ix)+h(x)\right\}.
\end{equation}
If, additionally, $\alpha_n=0$ for all $n\geq 1$, then Algorithm \ref{alg-opt} becomes the error-free variant of the iterative scheme 
given in \cite[Theorem 4.2]{combettes-pesquet} for solving \eqref{opt2}. 
\end{remark}

\end{document}